\newtheorem{theorem}{Theorem}
\newtheorem{lemma}{Lemma}
\renewcommand{\epsilon}{\varepsilon}
\def\Id{\text{\rm Id}}
\def\cA{\mathcal{A}}
\def\cB{\mathcal{B}}
\def\N{\mathbb{N}}
\def\Z{\mathbb{Z}}
\def\R{\mathbb{R}}
\begin{document}

\title{H\"{o}lder continuity of Oseledets splittings
for semi-invertible operator cocycles}

\begin{abstract}
For H\"older continuous cocycles over an invertible, Lipschitz base, we establish the  H\"older continuity of Oseledets subspaces  on compact sets of arbitrarily large measure.
This extends a result of Ara\'{u}jo, Bufetov, and Filip~\cite{ABF} by considering possibly noninvertible cocycles, which in addition may take values in the space of compact operators on a Hilbert space.
As a by-product of our work, we also show that a noninvertible cocycle with nonvanishing Lyapunov exponents exhibits
 nonuniformly hyperbolic behaviour (in the sense of Pesin) on a set of full measure.
\end{abstract}

\begin{thanks}
{DD was supported by an Australian Research Council Discovery Project DP150100017 and in part by the Croatian Science Foundation under the project IP-2014-09-2285.  GF is supported by an ARC Future Fellowship.}
\end{thanks}

\author{Davor Dragi\v cevi\'c}
\address{School of Mathematics and Statistics, University of New South Wales, Sydney NSW 2052, Australia}
\email{d.dragicevic@unsw.edu.au}

\author{Gary Froyland}
\address{School of Mathematics and Statistics, University of New South Wales, Sydney NSW 2052, Australia}
\email{g.froyland@unsw.edu.au}

\keywords{H\"{o}lder continuity, Oseledets subspaces}
\subjclass[2010]{Primary: 37D25.}
\maketitle

\section{Introduction}
The celebrated Oseledets multiplicative ergodic theorem (MET)~\cite{Oseledets} plays a fundamental role in the modern theory of dynamical systems. At an abstract level, the MET generalises the notion of eigenvalues and eigenvectors for a single matrix $A\in \mathbb{R}^{d\times d}$ to concatenations of matrices $A(f^{n-1}x)\cdots A(f(x))A(x)$, where $A \colon X \to\mathbb{R}^{d\times d}$ is an invertible  matrix-valued function on a probability space $(X,\mathcal{B},\mu)$, and $f:X\circlearrowleft$.
Under some technical assumptions, the MET guarantees the existence of  a finite set of numbers (called Lyapunov exponents) and subspaces of $\R^d$ (called Oseledets subspaces) which either form a
decomposition or a filtration of $\R^d$ (depending on whether $f$ is invertible or not) such that Lyapunov exponents describe the asymptotic growth of vectors which belong to Oseledets subspaces under the action of~$A$.

 Arguably the most important applications of this result are  in the area of smooth dynamics. For example, the proof of~MET  initiated the study of nonuniformly hyperbolic dynamical systems; that is, systems with nonzero Lyapunov exponents with respect to some smooth invariant probability measure.  
Since the landmark works of  Pesin in 1970s, the theory  of nonuniform hyperbolicity  emerged as an independent, rich and active discipline lying at the heart of dynamical  systems theory.  Among the most important consequences of nonuniform hyperbolicity is the  existence of stable invariant manifolds and their absolute continuity  property (see~\cite{Pesin}). The theory also describes the ergodic properties of a dynamical system with a finite invariant measure that is absolutely continuous with respect to the volume, and it expresses the Kolmogorov-Sinai entropy in terms of the Lyapunov exponents by Pesin's entropy formula (see~\cite{Pesin}). Furthermore, combining the nonuniform hyperbolicity with the nontrivial recurrence guaranteed by the existence of a finite invariant measure, the work of Katok~\cite{Katok} revealed a rich and complicated orbit structure, including an exponential growth rate for the number of periodic points measured in terms of the topological entropy, and the approximation of the entropy of  an invariant measure by uniformly hyperbolic horseshoes. More recently, Barreira,  Pesin and Schmeling~\cite{BPS} discovered a striking relation between this theory and  a dimension theory of dynamical systems by resolving the long standing Eckmann-Ruelle conjecture.  We refer to~\cite{BP} for further references and  a detailed exposition  of this theory.

Oseledets' MET has not only been reproved in many different ways, it has also  been generalised several times, including to compact operators on Hilbert spaces by Ruelle \cite{Ruelle}, to compact operators on Banach spaces with some continuity conditions on the base $f$ and the dependence of the operators on $x\in X$ by Ma\~n\'e \cite{Mane}, to quasi-compact operators on possibly non-separable Banach spaces with continuity conditions by Thieullen \cite{Thieullen}, and to quasi-compact operators on separable Banach spaces with weaker continuity conditions by Lian and Lu \cite{LL}.
Prior to the publication of \cite{FLQ}, all previous work considered the MET in one (or both) of two flavours:  either there is no invertibility assumption on the base and the linear actions, and one obtains the existence of an equivarient flag or filtration;  or there is an invertibility assumption on \emph{both} the base \emph{and} the linear actions, and one obtains the much stronger outcome of existence of an equivariant splitting.

Froyland, Lloyd, and Quas \cite{FLQ} extended the classical Oseledets multiplicative ergodic theorem by proving that if the base is invertible, a unique Oseledets splitting exists even when the matrices are not necessarily invertible.
\begin{theorem}[\cite{FLQ}]\label{t1}
Let $f:X\circlearrowleft$ preserve an ergodic Borel probability measure $\mu$ and assume that $A:X\to \mathbb{R}^{d\times d}$ satisfies
\begin{equation}\label{G}
\int_X \log^+ \lVert A(x)\rVert \, d\mu (x) <+\infty.
\end{equation}
Then there exist  numbers
\begin{equation}\label{LE}
-\infty \le \lambda_1 < \lambda_2< \ldots < \lambda_k
\end{equation}
and for $\mu$-a.e. $x\in M$ a measurable  decomposition
\begin{equation}\label{OS}
\R^d=E_1(x) \oplus E_2(x) \oplus \ldots \oplus E_k(x)
\end{equation}
such that
\begin{equation}\label{inv}
A(x) E_i(x) \subset E_i(f(x))  \text{(with equality if $\lambda_i > -\infty$)}
\end{equation}
and
\begin{equation}\label{exponents}
\lim_{n\to \infty} \frac 1 n \log \lVert A(f^{n-1}x)\cdots A(fx)A(x)v\rVert=\lambda_i, \quad \text{for $v\in E_i(x)\setminus \{0\}$,  $i\in \{1, \ldots, k\}$.}
\end{equation}
\end{theorem}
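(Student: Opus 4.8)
\emph{Sketch of the intended proof.} The numbers in \eqref{LE}, the limit \eqref{exponents} for vectors in the top level of the filtration, and the existence of \emph{some} measurable equivariant filtration are already furnished by the classical one-sided multiplicative ergodic theorem of Oseledets (in Ruelle's finite-dimensional form \cite{Ruelle}), whose only hypothesis is precisely the integrability bound \eqref{G} and which requires no invertibility of $f$ or of $A$. So the plan is first to \emph{invoke} that theorem to obtain the exponents $\lambda_1<\dots<\lambda_k$ with multiplicities $d_i$ together with a measurable filtration $\{0\}=F_0(x)\subsetneq F_1(x)\subsetneq\dots\subsetneq F_k(x)=\R^d$ satisfying $A(x)F_i(x)\subseteq F_i(fx)$ and $\lim_{n\to\infty}\frac1n\log\lVert A^{(n)}(x)v\rVert=\lambda_i$ for every $v\in F_i(x)\setminus F_{i-1}(x)$, where $A^{(n)}(x):=A(f^{n-1}x)\cdots A(x)$ and $\lambda_1=-\infty$ is allowed. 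All of the genuinely new content then amounts to \emph{splitting} this filtration $A$-equivariantly: for each $i$ one must produce a measurable $d_i$-dimensional subspace $E_i(x)$ with $F_i(x)=F_{i-1}(x)\oplus E_i(x)$ and $A(x)E_i(x)=E_i(fx)$ whenever $\lambda_i>-\infty$. Granting this, one sets $E_1(x):=F_1(x)$ (so $A(x)E_1(x)\subseteq E_1(fx)$, with equality when $\lambda_1>-\infty$ by a dimension count, since then $\Ker A(x)=\{0\}$ a.e.), reads off \eqref{OS} from $F_j(x)=F_{j-1}(x)\oplus E_j(x)$ for all $j$, obtains \eqref{inv}, and deduces \eqref{exponents} from the growth property of the filtration because $E_i(x)\subseteq F_i(x)$ while $E_i(x)\cap F_{i-1}(x)=\{0\}$.

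To construct $E_i$ for a fixed $i\in\{2,\dots,k\}$ with $\lambda_i>-\infty$ I would use the invertibility of $f$ to \emph{push a reference complement forward out of the past}. Put $C(y):=F_{i-1}(y)^{\perp}\cap F_i(y)$, a measurable $d_i$-dimensional complement of $F_{i-1}(y)$ inside $F_i(y)$, and $C_n(x):=A^{(n)}(f^{-n}x)\bigl(C(f^{-n}x)\bigr)$. Every nonzero vector of $C(f^{-n}x)$ has forward exponent exactly $\lambda_i>-\infty$, so $A^{(n)}(f^{-n}x)$ is injective on $C(f^{-n}x)$ and its image lies in $F_i(x)$ but meets $F_{i-1}(x)$ only at $0$; hence each $C_n(x)$ is again a $d_i$-dimensional complement of $F_{i-1}(x)$ in $F_i(x)$, and the cocycle identity $A(x)A^{(n)}(f^{-n}x)=A^{(n+1)}(f^{-(n+1)}(fx))$ gives $A(x)C_n(x)=C_{n+1}(fx)$. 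The decisive claim is that $(C_n(x))_{n\ge0}$ is Cauchy in the Grassmannian of $d_i$-planes inside $F_i(x)$: passing from $C_n$ to $C_{n+1}$ only replaces the complement $C(f^{-n}x)$ inside $F_i(f^{-n}x)$ by the nearby complement $A(f^{-(n+1)}x)C(f^{-(n+1)}x)$ before applying the same map $A^{(n)}(f^{-n}x)$, and since that map stretches the $\lambda_i$-directions while stretching $F_{i-1}(f^{-n}x)$ at rate at most $\lambda_{i-1}$, the images of any two complements of $F_{i-1}(f^{-n}x)$ are $e^{-n(\lambda_i-\lambda_{i-1})}$-close up to a subexponential factor. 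Granting convergence, set $E_i(x):=\lim_{n\to\infty}C_n(x)$; it is measurable, it is a complement of $F_{i-1}(x)$ in $F_i(x)$, and because $\Ker A(x)\cap F_i(x)\subseteq F_1(x)\subseteq F_{i-1}(x)$ the map $A(x)$ is injective on every such complement, hence a homeomorphism of the relevant Grassmannians, so letting $n\to\infty$ in $A(x)C_n(x)=C_{n+1}(fx)$ yields $A(x)E_i(x)=E_i(fx)$, with equality by comparing dimensions. Carried out for every $i\in\{2,\dots,k\}$, the subspaces $F_1(x),E_2(x),\dots,E_k(x)$ automatically lie in direct sum.

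The crux — and the step I expect to cost the most work — is the Cauchy estimate just invoked, because it concerns $A^{(n)}(f^{-n}x)$ with a \emph{moving} base point, whereas the MET controls $A^{(n)}(y)$ directly only for a fixed $y$ as $n\to\infty$. Bridging this requires a Pesin-type regularity statement: for $\mu$-a.e.\ $x$ there is a function $\epsilon_n(x)\ge1$ with $\frac1n\log\epsilon_n(x)\to0$ giving tempered control along the backward orbit $\{f^{-n}x\}_{n\ge0}$ of the norms, co-norms and angles attached to the forward MET — for instance $\lVert A^{(n)}(f^{-n}x)|_{F_{i-1}(f^{-n}x)}\rVert\le e^{n\lambda_{i-1}}\epsilon_n(x)$, a matching lower bound $e^{n\lambda_i}/\epsilon_n(x)$ for the least stretch of $A^{(n)}(f^{-n}x)$ in the $\lambda_i$-directions, and lower bounds $1/\epsilon_n(x)$ on the angles between $F_{i-1}(f^{-n}x)$ and the images of complements pushed forward one step earlier. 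Such $\epsilon_n(x)$ are manufactured from the conclusions of the forward MET together with the Birkhoff ergodic theorem (or a Borel--Cantelli argument) applied along $\{f^{-n}x\}$, which is exactly where the invertibility of $f$ enters and is a genuine step beyond the bare MET. Once they are available the series $\sum_n e^{-n(\lambda_i-\lambda_{i-1})}\epsilon_n(x)$ converges, $(C_n(x))_n$ is Cauchy, $E_i(x)$ is well defined and measurable as a pointwise limit of measurable data, and the construction closes; the remaining bookkeeping — exact equality in \eqref{inv} when $\lambda_i>-\infty$ versus the possibly strict inclusion for $E_1$ when $\lambda_1=-\infty$, the sharp limit in \eqref{exponents}, and measurability — is then routine. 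An equivalent route replaces the pushed-forward complements $C_n(x)$ by the span of the $m_i:=\sum_{j\ge i}d_j$ top left singular vectors of $A^{(n)}(f^{-n}x)$ intersected with $F_i(x)$; it has the same crux.
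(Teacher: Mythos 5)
This statement is Theorem~\ref{t1}, which the paper does not prove at all: it is quoted verbatim from \cite{FLQ}, so there is no in-paper argument to compare yours against. Your sketch is, in outline, the same strategy as the published proof in \cite{FLQ}: start from the one-sided Oseledets filtration, push a reference complement (or, equivalently, spans of singular vectors) forward along the backward orbit $f^{-n}x$, and show convergence in the Grassmannian using the spectral gap $\lambda_i-\lambda_{i-1}$ together with tempered control of norms, co-norms and angles along $\{f^{-n}x\}$. The outline is sound and you have correctly located the real work in the two places where it lives: (i) upgrading the pointwise limit \eqref{exponents} on $F_i\setminus F_{i-1}$ to a uniform lower bound $e^{n\lambda_i}/\epsilon_n(x)$ on the co-norm over an entire $d_i$-dimensional complement (this needs the determinant/exterior-power trick, not just the vectorwise MET), and (ii) the tempered bounds along the \emph{backward} orbit, including a lower bound on the angle between $A(f^{-(n+1)}x)C(f^{-(n+1)}x)$ and $F_{i-1}(f^{-n}x)$, without which your claimed $e^{-n(\lambda_i-\lambda_{i-1})}$ contraction between consecutive pushed-forward complements does not follow. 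Both are fillable by standard Pesin-type tempering (of the kind this paper itself carries out in Theorem~\ref{t3}), so I regard the proposal as a correct sketch of the known proof rather than a complete proof.
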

Semi-invertible versions of the Oseledets theorem for quasi-compact operator cocycles were developed in \cite{FLQ2} and \cite{GTQ1}, generalising the results of \cite{Thieullen} and \cite{LL}, respectively.
The numbers in~\eqref{LE} are called \emph{Lyapunov exponents} and we will refer to the splitting in~\eqref{OS} as to the  \emph{Oseledets splitting}.  Furthermore, we will say that $E_i(x)$ is the Oseledets subspace that corresponds to a  Lyapunov exponent $\lambda_i$.

It is well-known that in general, in all the above mentioned generalizations of MET, the Oseledets subspaces depend only measurably on base points.
However, it was recently proved by Ara\'{u}jo, Bufetov and Filip~\cite{ABF} that under the assumptions that $f:X\circlearrowleft$ is a Lipschitz map and that  $A:X\to GL(d,\mathbb{R})$ is H\"older continuous, one is able to establish H\"older continuity of the Oseledets subspaces on  compact  sets  of arbitrarily large measure.
The arguments in~\cite{ABF} build on a previous work of Brin~\cite{B} who proved (in a particular case of derivative cocycles) that for Anosov systems, the stable and unstable distributions depend H\"older continuosly everywhere and that the same happens for nonuniformly hyperbolic systems but on a compact set of arbitrarily large measure.

The main objective of this paper is to extend the results from~\cite{ABF} by considering possibly noninvertible cocycles, as well as compact operator cocycles with values in the space of all bounded linear operators acting on some Hilbert space.
In order to describe our main result in  a finite-dimensional case, assume that  $A\colon X \to \R^{d\times d}$ is  a H\"older continuous cocycle  over an invertible Lipschitz transformation $f:X\circlearrowleft$ satisfying~\eqref{G}. We prove that the Oseledets subspaces  in Theorem~\ref{t1} are H\"older  continuous on compact  sets  of arbitrarily large measure.
We emphasize that the lack of  the invertibility causes substantial complications and that  consequently crucial parts of our argument differ from the approach developed in~\cite{ABF}.
In addition, this new setting requires new proofs of versions of some well-known facts from Pesin theory.
For example, Theorem~\ref{t3} establishes upper and lower bounds for the growth of the cocycles when restricted to the subbundles $E(x)$ and $F(x)$ given by
\[
E(x)=E_1(x)\oplus \ldots \oplus E_i(x) \quad \text{and} \quad F(x)=E_{i+1}\oplus \ldots \oplus  E_k(x),
\]
as well as a lower bound on the angle between $E(x)$ and $F(x)$.
This result plays an important role in our arguments but is also of independent interest since it in particular implies that if all Lyapunov exponents are nonzero then the cocycle exhibits a nonuniformly hyperbolic behaviour on a set of full measure.
To the best of our knowledge this result had not yet been established before for semi-invertible cocycles.

We emphasize that semi-invertible cocycles arise in two very important situations from the point of view of applications.
Firstly, the study of Markov chains in a random environment (MCRNs).
Markov chains form the basis of mathematical models for a huge variety of physical, chemical, and biological phenomena, including problems in statistical mechanics, (bio)chemical engineering, epidemic modelling, complex networks, and genetics.
More typically than not, the underlying transition probabilities in the Markov chain model evolve over time according to some external random or time-dependent environment.
Instead of having a single invariant probability measure for a stationary Markov chain, Markov chains in random environments possess a family of (random) invariant measures (see e.g.\ \cite{Cogburn}), which depend on the environment.
In the language of Oseledets' MET, $X$ is the environment, $f:X\circlearrowleft$ describes the evolution of the random environment, and $A:X\to\mathbb{R}^{d\times d}$ is a stochastic matrix-valued function.
The family of random invariant measures are the top Oseledets spaces, corresponding to the leading Lyapunov exponent $\lambda_k=0$.
The stability of these random invariant measures has been explored in \cite{FGT-sd}, where it is shown that under mild assumptions on perturbations to $f$ or $A$ the random invariant measure is continuous {in probability} with respect to the environment.
Theorem \ref{main} in the present work will show that if $f$ is Lipschitz and $x\mapsto A(x)$ is H\"older continuous, then the random invariant measure depends H\"older continuously on the environment configuration $x\in X$ on compact sets of arbitrarily large measure.
These assumptions on $f$ and $A$ are very reasonable for mathematical models of real-world processes and our result provides the assurance that on the vast bulk of the environment space, the time-asymptotic distribution of trajectories of the MCRN varies continuously with the environment.

A second application, which was the motivation for the work \cite{FLQ}, concerns a program to understand time-dependent dynamical systems through transfer operator cocycles.
One begins with a function $x\mapsto T_x$, where each $T_x:M\circlearrowleft$ is a nonlinear map on a smooth Riemannian manifold $M$.
A map cocycle $T_{f^{n-1}x}\circ\cdots\circ T_{fx}\circ T_x$ represents the time-dependent evolution of a nonlinear dynamical system.
For example, let $M$ be a three-dimensional manifold representing the ocean, $X$ be the internal configuration of the ocean (e.g.\ the distribution of pressure gradients), $f$ describe how the internal configuration changes over one day, and $T_x$ describe the motion of water particles over one day given the current configuration is $x$.
Associated with each $T_x$ is a linear operator (the transfer operator, see e.g.\ \cite{Baladi} for definitions) $\mathcal{L}_x:\mathcal{B}\circlearrowleft$, acting on a suitable Banach space $\mathcal{B}$.
Continuing with our ocean example, if $g(z):M\to \mathbb{R}$ describes the distribution of some inert, neutrally buoyant chemical in the ocean at ``time'' $x\in X$, then $(\mathcal{L}_xg)(z)$ is the distribution of the chemical one day later. That is, the transfer operators $\{\mathcal{L}_x\}_{x\in X}$ transform densities in $\mathcal{B}$ to densities in $\mathcal{B}$ just as the maps $\{T_x\}_{x\in X}$ transform points in $M$ to points in $M$.

In many areas of nonlinear dynamics, including fluid dynamics and models of geophysical flow such as the ocean and atmosphere, one is interested in structures that decay to equilibrium very slowly;  so-called \emph{Lagrangian coherent structures} or \emph{coherent sets}.
In fluid dynamics, these represent parts of the fluid that are slow to mix with the rest of the fluid;  in the ocean and atmosphere, these structures have physical manifestations as gyres and eddies, and vortices, respectively.
It turns out that the second largest Lyapunov exponent (the first nontrivial exponent after $\lambda_k=0$) describes the time-asymptotic decay rate of the family of most slowly decaying signed distributions $\{g_x(z)\}_{x\in X}$.
Furthermore, and crucially for applications, these signed distributions are given by the corresponding 2nd Oseledets spaces;  see \cite{FLQ,FLS} for details.
In numerical experiments, the transfer operators $\mathcal{L}_x$ are represented as large stochastic matrices on computers, and the Oseledets spaces are similarly discretised.
Theorem \ref{main} in the present paper states that if $f$ is Lipschitz and the linear actions are H\"older continuous, then the corresponding Oseledets spaces, which describe the coherent structures, are H\"older continuous functions on subsets of the base space $X$ of arbitrarily large measure.
This establishes the important fact that in applications, dramatic changes in coherent structures as a function of the driving configuration are extremely rare.

\section{Semi-invertible cocycles and nonuniform hyperbolicity}
In order to make our arguments more transparent and easier to follow, our presentation is for finite-dimension cocycles.
In the final section we highlight the changes necessary to deal with the infinite-dimensional setting.
A measurable map
$\mathcal A \colon X \times \N_0 \to \mathbb{R}^{d\times d}$, where $\N_0=\{0, 1, 2, \ldots \}$ is said to be a \emph{cocycle} over $f$ if:
\begin{enumerate}
\item $\mathcal A(x,0)=\Id$ for every $x\in X$;
\item $\mathcal A(x, n+m)=\mathcal A(f^n(x), m) \mathcal A(x,n)$ for every $x\in X$ and $n, m\ge 0$.
\end{enumerate}
A map $A\colon X \to \mathbb{R}^{d\times d}$ defined by $A(x)=\cA(x,1)$, $x\in X$ is called a \emph{generator} of a cocycle $\mathcal A$.
For an $f$-invariant set $\Lambda\subset X$, a family of subspaces $E(x)\subset \mathbb{R}^d, x\in \Lambda$ is called $\mathcal{A}$\emph{-invariant} if $A(x)E(x)\subset E(fx)$ for each $x\in \Lambda$.

We will now establish several auxiliary results related to Theorem~\ref{t1} that will be used throughout the paper.  We start with the following lemma.

\begin{lemma}\label{ang}
Assume that $\Lambda$ is an $f$-invariant set and let $E(x)\subset \mathbb{R}^d$ and $F(x)\subset \mathbb{R}^d$, $x\in \Lambda$  be $\cA$-invariant families of subspaces with the property that there exist $\lambda_1 <\lambda_2$, $\epsilon >0$   and measurable functions $C, \tilde C \colon \Lambda \to (0, \infty)$ such that
\begin{enumerate}
\item \begin{equation}\label{angles0}
\lambda_1+3\epsilon \le \lambda_2-2\epsilon;
\end{equation}
\item $E(x)\cap F(x)=\{0\}$ for $x\in \Lambda$;
\item for $x\in \Lambda$, $v\in E(x)\oplus F(x)$ and $n\ge 0$,
\begin{equation}\label{angles1}
\lVert \cA(x,n)v\rVert \le \tilde C(x)e^{(\lambda_2+\epsilon)n}\lVert v\rVert;
\end{equation}
\item for $x\in \Lambda$, $v\in F(x)$ and $n\ge 0$,
\begin{equation}\label{angles2}
\lVert \cA(x,n)v\rVert \ge \frac{1}{C(x)}e^{(\lambda_2-\epsilon)n} \lVert v\rVert;
\end{equation}
\item for $x\in \Lambda$, $v\in E(x)$ and $n\ge 0$,
\begin{equation}\label{angles3}
\lVert \cA(x,n)v\rVert \le C(x)e^{(\lambda_1+\epsilon)n}\lVert v\rVert;
\end{equation}
\item for $x\in \Lambda$ and $m\in \Z$,
\begin{equation}\label{angles4}
\tilde C(f^m(x)) \le \tilde C(x)e^{\epsilon \lvert m\rvert} \quad \text{and} \quad C(f^m(x)) \le C(x)e^{\epsilon \lvert m\rvert}.
\end{equation}
\end{enumerate}
Then, there exists  a measurable function $K\colon \Lambda \to (0, \infty)$ satisfying
\begin{equation}\label{KM} K(f^m(x)) \le K(x)e^{5\epsilon \lvert m\rvert}, \quad \text{for $x\in \Lambda$ and $m\in \Z$} \end{equation}
and such that
\begin{equation}\label{angles5}
\lVert v_1\rVert \le K(x)\lVert v_1+v_2\rVert \quad \text{and} \quad \lVert v_2\rVert \le K(x)\lVert v_1+v_2\rVert,
\end{equation}
for $v_1\in E(x)$ and $v_2\in F(x)$.
\end{lemma}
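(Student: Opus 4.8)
The plan is to derive \eqref{angles5} from a single elementary estimate produced by iterating the cocycle, the point being that $F(x)$ is expanded strictly faster than $E(x)$, so a component $v_2\in F(x)$ that is large relative to $w:=v_1+v_2$ would eventually make $\lVert\cA(x,n)w\rVert$ exceed what \eqref{angles1} allows. First I would fix $x\in\Lambda$ and $v_1\in E(x)$, $v_2\in F(x)$ and dispose of the trivial cases: if $E(x)=\{0\}$, $F(x)=\{0\}$ or $w=0$, then \eqref{angles5} holds as soon as $K$ is taken to satisfy $K\ge 1$. In the remaining case both subspaces are nontrivial, and then taking $n=0$ in \eqref{angles1}, \eqref{angles2}, \eqref{angles3} forces $C(x),\tilde C(x)\ge 1$, a fact used below; moreover it suffices to bound $\lVert v_2\rVert$ by a multiple of $\lVert w\rVert$, since $\lVert v_1\rVert\le\lVert w\rVert+\lVert v_2\rVert$.

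For the key estimate, apply $\cA(x,n)$ to $w=v_1+v_2$, use $\lVert\cA(x,n)v_2\rVert\le\lVert\cA(x,n)w\rVert+\lVert\cA(x,n)v_1\rVert$, bound the left side below by \eqref{angles2} and the two right-hand terms above by \eqref{angles1} and \eqref{angles3}, multiply through by $C(x)e^{-(\lambda_2-\epsilon)n}$, and substitute $\lVert v_1\rVert\le\lVert w\rVert+\lVert v_2\rVert$. This yields, for every $n\ge 0$,
\[
\lVert v_2\rVert \le C(x)\tilde C(x)\,e^{2\epsilon n}\lVert w\rVert + C(x)^2 e^{(\lambda_1-\lambda_2+2\epsilon)n}\bigl(\lVert w\rVert+\lVert v_2\rVert\bigr).
\]
By \eqref{angles0} we have $\lambda_1-\lambda_2+2\epsilon\le-3\epsilon$ (read as $-\infty$, hence the factor as $0$, when $\lambda_1=-\infty$), so the coefficient of $\lVert v_2\rVert$ on the right decays in $n$; choosing $n=n(x)$ to be the least nonnegative integer with $C(x)^2 e^{-3\epsilon n}\le\tfrac12$ makes that coefficient at most $\tfrac12$, and rearranging gives $\lVert v_2\rVert\le\bigl(2C(x)\tilde C(x)e^{2\epsilon n(x)}+1\bigr)\lVert w\rVert$, whence also $\lVert v_1\rVert\le\bigl(2C(x)\tilde C(x)e^{2\epsilon n(x)}+2\bigr)\lVert w\rVert$.

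Finally I would eliminate $n(x)$: minimality of $n(x)$ together with $C(x)\ge 1$ (which also covers $n(x)=0$) gives $e^{2\epsilon n(x)}\le e^{3\epsilon n(x)}\le 2e^{3\epsilon}C(x)^2$, so one may set
\[
K(x):=4e^{3\epsilon}\,C(x)^{3}\,\tilde C(x)+2 .
\]
This $K$ satisfies $K\ge 1$, is measurable because $C$, $\tilde C$ and $x\mapsto n(x)$ are, and gives \eqref{angles5}. For \eqref{KM}, \eqref{angles4} yields $C(f^m x)^3\tilde C(f^m x)\le C(x)^3\tilde C(x)\,e^{4\epsilon\lvert m\rvert}$, hence $K(f^m x)\le e^{4\epsilon\lvert m\rvert}K(x)\le e^{5\epsilon\lvert m\rvert}K(x)$.

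The only step requiring real care is this last accounting: the cutoff $n(x)$ must be chosen so that, once it is rewritten in terms of $C(x)$, the resulting $K$ still obeys the tempering bound with the prescribed constant $5\epsilon$ — here the cruder exponent $4\epsilon$ already works, so there is some room to spare. Everything else reduces to the triangle inequality and the manipulation displayed above, and I anticipate no essential obstacle.
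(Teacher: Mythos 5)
Your proof is correct. The core mechanism is the same as the paper's: apply $\cA(x,n)$ to $v_1+v_2$, play the lower bound \eqref{angles2} on $F(x)$ against the upper bounds \eqref{angles3} on $E(x)$ and \eqref{angles1} on the sum, and stop at a cutoff time $n(x)$ determined by $C(x)$ at which the $F$-component dominates. The differences are in the bookkeeping, and they work in your favour. First, you argue directly with arbitrary $v_1,v_2$ and absorb the $\lVert v_2\rVert$ term on the right, whereas the paper normalises to unit vectors, introduces the angle function $\gamma(x)=\inf\{\lVert v_1+v_2\rVert:\lVert v_1\rVert=\lVert v_2\rVert=1\}$, and passes through operator-norm bounds on the projections; that step of theirs is purely cosmetic relative to \eqref{angles5} and you rightly skip it. Second, and more substantively, the paper keeps $n(x)$ inside the formula $K(x)=\max\{1,2C(x)\tilde C(x)e^{3\epsilon n(x)}\}$ and must therefore prove the sub-additivity $n(f^m(x))\le n(x)+\lvert m\rvert$ --- the most delicate computation in their proof, and the source of the constant $5\epsilon$ in \eqref{KM} ($3\epsilon$ from $e^{3\epsilon n(x)}$ plus $2\epsilon$ from $C\tilde C$). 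You instead use minimality of $n(x)$ to convert $e^{3\epsilon n(x)}$ into $2e^{3\epsilon}C(x)^2$, so that $K$ is an explicit polynomial in $C(x)$ and $\tilde C(x)$ and its temperedness is inherited directly from \eqref{angles4}, with constant $4\epsilon\le 5\epsilon$ and with measurability for free. Your handling of the degenerate cases ($E(x)$ or $F(x)$ trivial, where one cannot deduce $C(x),\tilde C(x)\ge 1$ from the $n=0$ instances of the hypotheses, but where $K(x)\ge 2$ suffices) is also sound.
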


\begin{proof}
Let $P(x) \colon E(x) \oplus F(x) \to E(x)$ and $Q(x) \colon E(x) \oplus F(x) \to F(x)$ be projections. Set
\[
\gamma (x)=\inf \{\lVert v_1+v_2\rVert: \ v_1\in E(x), \ v_2\in F(x), \ \lVert v_1\rVert=\lVert v_2\rVert=1 \}.
\]
For any  $v\in E(x) \oplus F(x)$ such that $P(x)v\neq 0$ and $Q(x)v\neq 0$, we have that
\[
\begin{split}
\gamma (x) &\le \bigg{\lVert} \frac{P(x)v}{\lVert P(x)v\rVert}+\frac{Q(x)v}{\lVert Q(x)v\rVert} \bigg{\rVert} \\
&=\frac{1}{\lVert P(x)v\rVert}\bigg{\lVert} P(x)v+\frac{\lVert P(x)v\rVert}{\lVert Q(x)v\rVert}Q(x)v \bigg{\rVert} \\
&= \frac{1}{\lVert P(x)v\rVert}\bigg{\lVert} v+\frac{\lVert P(x)v\rVert-\lVert Q(x)v\rVert}{\lVert Q(x)v\rVert}Q(x)v \bigg{\rVert} \\
&\le \frac{2\lVert v\rVert}{\lVert P(x)v\rVert}.
\end{split}
\]
Hence,
\[
\lVert P(x)v\rVert \le \frac{2}{\gamma(x)}\lVert v\rVert.
\]
We note that  the above inequality is trivially satisfied when $P(x)v=0$. Finally,  if $Q(x)v=0$ then $P(x)v=v$ and we conclude that
\begin{equation}\label{proj}
\lVert P(x)\rVert \le \max \{ 1, 2/ \gamma(x) \}, \quad \text{for $x\in \Lambda$.}
\end{equation}
Similarly,
\begin{equation}\label{proj2}
\lVert Q(x)\rVert \le \max \{ 1, 2/ \gamma(x) \}, \quad \text{for $x\in \Lambda$.}
\end{equation}

Take now arbitrary $v_1\in E(x)$ and $v_2\in F(x)$ such that $\lVert v_1\rVert=\lVert v_2\rVert=1$. By~\eqref{angles1}, \eqref{angles2} and~\eqref{angles3} we have that
\begin{eqnarray}
\nonumber\lVert v_1+v_2\rVert &\ge& \frac{1}{\tilde C(x)e^{(\lambda_2+\epsilon)n}} \lVert \cA(x, n)(v_1+v_2)\rVert  \\
\label{14.5}
&\ge& \frac{1}{\tilde C(x)e^{(\lambda_2+\epsilon)n}} \bigg{(} \frac{1}{C(x)}e^{(\lambda_2-\epsilon)n}-C(x)e^{(\lambda_1+\epsilon)n}\bigg{)},
\end{eqnarray}
for every $n\ge 0$.  Let $n(x)$ be the smallest integer such that
\begin{equation}\label{NX}
\frac{1}{C(x)}e^{(\lambda_2-\epsilon)n(x)}-C(x)e^{(\lambda_1+\epsilon)n(x)} \ge \frac{1}{C(x)}e^{(\lambda_2-2\epsilon)n(x)}
\end{equation}
or equivalently
\begin{equation}
\label{15.5}e^{(\lambda_2-\epsilon)n(x)}-C(x)^2e^{(\lambda_1+\epsilon)n(x)} \ge e^{(\lambda_2-2\epsilon)n(x)}.
\end{equation}
By (\ref{14.5}) and \eqref{NX},
\[
\gamma(x) \ge \frac{1}{\tilde C(x)e^{(\lambda_2+\epsilon)n(x)}} \cdot \frac{1}{C(x)}e^{(\lambda_2-2\epsilon)n(x)}
\]
and thus
\begin{equation}\label{angles6}
\frac{2}{\gamma(x)} \le 2C(x)\tilde C(x)e^{3\epsilon n(x)}.
\end{equation}
Finally, we claim that  that $n(f^m(x)) \le n(x)+\lvert m\rvert$ for each $x\in \Lambda$ and  $m\in \Z$.  Indeed, using~\eqref{angles0} and~\eqref{angles4},  we have that
\begin{eqnarray*}
\lefteqn{e^{(\lambda_2-\epsilon)(n(x)+\lvert m\rvert)}-C(f^m(x))^2e^{(\lambda_1+\epsilon)(n(x)+\lvert m\rvert)}} \\ &\ge& e^{(\lambda_2-\epsilon)n(x)} \cdot e^{(\lambda_2-\epsilon)\lvert m\rvert}-C(x)^2e^{(\lambda_1+\epsilon)n(x)} \cdot e^{(\lambda_1+3\epsilon)\lvert m\rvert} \\
&\ge& e^{(\lambda_2-\epsilon)n(x)} \cdot e^{(\lambda_2-\epsilon)\lvert m\rvert} -C(x)^2e^{(\lambda_1+\epsilon)n(x)} \cdot e^{(\lambda_2-2\epsilon)\lvert m\rvert} \\
&\ge & e^{(\lambda_2-2\epsilon)n(x)} \cdot e^{(\lambda_2-2\epsilon)\lvert m\rvert} \qquad\mbox{by (\ref{15.5})}\\
&\ge &e^{(\lambda_2-2\epsilon)(n(x)+\lvert m\rvert)}.
\end{eqnarray*}
In order to complete the proof of the lemma, we are going to show that the function $K(x)=\max \{ 1, 2C(x)\tilde C(x)e^{3\epsilon n(x)} \}$ satisfies~\eqref{KM} and~\eqref{angles5}. We note that~\eqref{angles5} follows directly from~\eqref{proj}, \eqref{proj2} and~\eqref{angles6}.  Moreover, using~\eqref{angles4}, we have that
\[
 C(f^m(x))\tilde C(f^m(x))e^{3\epsilon n(f^m(x))} \le C(x) \tilde C(x)e^{3\epsilon n(x)} \cdot e^{5\epsilon \lvert m\rvert},
\]
for each $x\in \Lambda$ and $m\in \Z$, which readily implies that~\eqref{KM} holds. 
\end{proof}

Suppose that the Lyapunov exponents of the cocycle $\cA$ are given by~\eqref{LE}. Then, for each $i\in \{1, \ldots, k\}$, we can associate to~\eqref{OS} a new decomposition of $\R^d$ as
\begin{equation}\label{OS2}
\R^d=\bigg{(}\bigoplus_{j\le i} E_j(x)\bigg{)} \oplus \bigg{(}\bigoplus_{j> i} E_j(x)\bigg{)}.
\end{equation}
The following result establishes exponential bounds for $\cA$ along the two subspaces forming the decomposition~\eqref{OS2} as well as for angles between them. For invertible cocycles such a result is well-known (see Theorem 3.3.1 in~\cite{BP} for example).
A major difficulty in adapting the arguments in~\cite{BP} is that they rely heavily on the well-known fact that the angles between Oseledets subspaces in the standard (invertible)  MET exhibit a subexponential growth along each trajectory.  On the other hand, to the best of our knowledge no such statement was established in relation to the semi-invertible version of MET stated in Theorem~\ref{t1}. This forces us to develop an argument (based on Lemma~\ref{ang}), which is  completely different from the one in~\cite{BP}, to first establish exponential bounds for $\cA$ along the subspaces in~\eqref{OS2} and then use this to deduce an appropriate bound for the angle between those subspaces.
\begin{theorem}\label{t3}
Let $\cA$ be a cocycle over $f$ satisfying~\eqref{G} with Lyapunov exponents as in~\eqref{LE} and take $i\in \{1, \ldots, k\}$.
Let
\[
E^1(x)=\bigoplus_{j=1}^iE_j(x)\quad \text{and} \quad E^2(x)=\bigoplus_{j=i+1}^kE_j(x).
\]
Then, there exists a Borel set $\Lambda \subset X$ such that $\mu(\Lambda)=1$ and for each $\epsilon >0$ there are   measurable functions $C, K \colon \Lambda \to (0, \infty)$ with the property that for every $x\in \Lambda$ we have that:
\begin{enumerate}
\item for each $v\in E^1(x)$ and $n\ge 0$,
\begin{equation}\label{1}
\lVert \cA(x,n)v\rVert \le C(x)e^{(\lambda_i+\epsilon)n} \lVert v\rVert,
\end{equation}
where if $i=1$ and $\lambda_1=-\infty$, $\lambda_1$  is replaced by any number that belongs to the  interval $(-\infty, \lambda_2)$;
\item for each $v\in E^2(x)$ and $n\ge 0$,
\begin{equation}\label{2}
\lVert \cA(x, n)v\rVert \ge \frac{1}{C(x)}e^{(\lambda_{i+1}-\epsilon)n}\lVert v\rVert;
\end{equation}
\item for each  $u\in E^1(x)$ and $v\in E^2(x)$,
 \begin{equation}\label{A} \lVert u\rVert \le K(x)\lVert u+v\rVert  \quad \text{and} \quad \lVert v\rVert \le  K(x)\lVert u+v\rVert, \end{equation}
\item for each $n\in \Z$,
\begin{equation}\label{3}
C(f^n(x)) \le C(x)e^{\epsilon \lvert n\rvert} \quad \text{and} \quad K(f^n(x)) \le K(x)e^{\epsilon \lvert n\rvert}.
\end{equation}
\end{enumerate}
\end{theorem}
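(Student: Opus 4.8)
Take $\Lambda$ to be the intersection of the full-measure $f$-invariant set on which the conclusions of Theorem~\ref{t1} hold with the $f$-invariant full-measure set of points at which all the limits used below exist, so that $\Lambda$ is independent of $\epsilon$. Fix $\epsilon>0$; a handful of auxiliary small parameters will appear, all in the end taken to be a sufficiently small fraction of $\epsilon$ and of the gaps $\lambda_{l+1}-\lambda_l$ so that the resulting bounds and tempering rates come out $\le\epsilon$ --- this is routine bookkeeping that I suppress.

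\emph{Step 1: the growth bounds \eqref{1} and \eqref{2}.} For \eqref{1}, note that $E^1$ is $\cA$-invariant and, picking a basis of each $E_l(x)$ with $l\le i$ and applying \eqref{exponents} to those finitely many vectors, one gets $\lVert\cA(x,n)|_{E^1(x)}\rVert\le D(x)e^{(\lambda_i+\delta)n}$ for all $n\ge0$ and some measurable $D\colon\Lambda\to(0,\infty)$ (when $\lambda_1=-\infty$ and $i=1$ the limit on $E_1$ is $-\infty$, so the estimate holds with any finite replacement of $\lambda_1$, as in the statement). The classical Pesin tempering lemma replaces $D$ by a tempered $\hat D\ge D$, whence \eqref{1}. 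For \eqref{2}, since $\lambda_l>-\infty$ for every $l>i$, \eqref{inv} makes $\cA(x,n)|_{E^2(x)}$ an isomorphism onto $E^2(f^nx)$, so the inverse of $\cA|_{E^2}$ is a well-defined cocycle over $f$ whose leading Lyapunov exponent equals $-\lambda_{i+1}$; its generator is integrable (a standard fact reflecting the finiteness of the exponents on $E^2$: e.g.\ bound $\lVert(\cA(y,1)|_{E^2(y)})^{-1}\rVert$ by $\lVert\cA(y,1)|_{E^2(y)}\rVert^{\dim E^2-1}/\lvert\det(\cA(y,1)|_{E^2(y)})\rvert$ and use \eqref{G} together with integrability of $\log\lvert\det(\cA(\cdot,1)|_{E^2(\cdot)})\rvert$). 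The Furstenberg--Kesten theorem then gives $\tfrac1n\log\lVert(\cA(x,n)|_{E^2(x)})^{-1}\rVert\to-\lambda_{i+1}$ on $\Lambda$, and after taking a supremum in $n$, tempering, and inverting, \eqref{2} follows. Enlarging $C$ to the maximum of the two tempered functions so produced gives \eqref{1}, \eqref{2} and the $C$-part of \eqref{3}.

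\emph{Step 2: the angle bound \eqref{A}.} Here Lemma~\ref{ang} is the tool, but it cannot be applied directly to the pair $(E^1,E^2)$: its hypothesis \eqref{angles1} forces all of $E\oplus F$ to grow no faster than the rate of $F$, whereas $E^2$ carries several distinct exponents up to $\lambda_k$. Instead I peel off one Oseledets space at a time. For each $j\in\{i+1,\dots,k\}$ apply Lemma~\ref{ang} with $E:=\bigoplus_{l<j}E_l(x)$, $F:=E_j(x)$ --- so $E\oplus F=\bigoplus_{l\le j}E_l(x)$, whose top rate is $\lambda_j$ --- $\lambda_1:=\lambda_{j-1}$ (a finite number $<\lambda_j$ when $\lambda_{j-1}=-\infty$), and $\lambda_2:=\lambda_j$: hypotheses \eqref{angles1} and \eqref{angles3} are precisely the tempered upper bounds of type \eqref{1} for $\bigoplus_{l\le j}E_l$ and $\bigoplus_{l<j}E_l$ (run Step~1 with $i$ replaced by $j$ and by $j-1$), hypothesis \eqref{angles2} is the tempered lower bound of type \eqref{2} restricted to $E_j$, and \eqref{angles4} holds by the tempering arranged in Step~1. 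Lemma~\ref{ang} thus yields a tempered $K_j\colon\Lambda\to(0,\infty)$ bounding the norms of the two projections attached to $\bigoplus_{l\le j}E_l(x)=\big(\bigoplus_{l<j}E_l(x)\big)\oplus E_j(x)$. The composition of these projections for $j=k,k-1,\dots,i+1$ is exactly the projection of $\R^d$ onto $E^1(x)$ along $E^2(x)$, so that projection and its complement have norms at most $\prod_{j=i+1}^kK_j(x)$; taking $K(x):=1+\prod_{j=i+1}^kK_j(x)$, which is tempered, yields \eqref{A} and the $K$-part of \eqref{3}.

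\emph{Where the difficulty lies.} The classical route to such a statement (cf.\ \cite{BP}) relies on subexponential growth of the angles between Oseledets subspaces along orbits, which is unavailable in the semi-invertible setting; the whole point of Step~2 is to obtain \eqref{A} from the growth bounds, through Lemma~\ref{ang}, rather than from angle regularity. The single most delicate point is \eqref{2}: unlike \eqref{1} it is not an operator-norm estimate but requires inverting $\cA$ on $E^2$ and controlling that inverse with a \emph{tempered} constant --- precisely where the finiteness of the exponents on $E^2$, the isomorphism property in \eqref{inv}, and the integrability of $\log\lvert\det(\cA(\cdot,1)|_{E^2(\cdot)})\rvert$ all enter. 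Once \eqref{1} and \eqref{2} are available in tempered form, Step~2 produces \eqref{A} and \eqref{3} with no further obstacle.
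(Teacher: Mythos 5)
Your proposal is correct and takes essentially the same route as the paper: the upper bound on $E^1$ via a basis argument plus tempering, the lower bound on $E^2$ via the determinant estimate giving integrability of $\log^+\lVert(A(x)\rvert E^2(x))^{-1}\rVert$ followed by an application of the first step to the inverse cocycle, and the angle bound by applying Lemma~\ref{ang} to the pairs $\big(\bigoplus_{l<j}E_l(x),E_j(x)\big)$ for $j=i+1,\dots,k$ and composing the resulting projections, which is the paper's inductive Step~3 in slightly cleaner form. One small correction: the inverse of $\cA$ restricted to $E^2$ is naturally a cocycle over $f^{-1}$ (with generator $A^{-1}\circ f^{-1}$), not over $f$, though this does not affect the argument.
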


\begin{proof}
\quad

\textsl{Step 1 -- Upper bound for growth on $E^1$ and temperedness of the function $C$:}
We begin by establishing property~\eqref{1}.
We start with the following lemma.

\begin{lemma}\label{9216}
We have
\begin{equation}\label{res}
\limsup_{n\to \infty} \frac {1}{n} \log \lVert \cA(x,n)\rvert E^1(x)\rVert \le \lambda_i \quad \text{for $\mu$-a.e. $x\in X$,}
\end{equation}
where $\cA(x,n)\rvert E^1(x)$ denotes the restriction of $\cA(x,n)$ onto $E^1(x)$.
\end{lemma}

\begin{proof}[Proof of the lemma]
Let $\{e_1, \ldots, e_l\}$ be an orthonormal basis for $E^1(x)$.  For each $n\in \N$ let $v_n\in E^1(x)$ be such that
$\lVert v_n\rVert=1$ and $ \lVert \cA(x,n)\rvert E^1(x)\rVert =\lVert \cA(x,n)v_n\rVert$. Furthemore, for  $n\in \N$, write $v_n$ in the form
\[
v_n=\sum_{j=1}^l a_{j,n}e_j,
\]
for $a_{j,n}\in \R$. We note that $\lvert a_{j,n}\rvert =\lvert \langle v_n, e_j\rangle \rvert \le \lVert v_n\rVert \cdot \lVert e_j\rVert=1$ and thus
\begin{equation}\label{TAA}
\lVert \cA(x,n)\rvert E^1(x)\rVert \le \sum_{j=1}^l \lvert a_{j,n}\rvert \cdot \lVert \cA(x,n)e_j\rVert \le \sum_{j=1}^l  \lVert \cA(x,n)e_j\rVert.
\end{equation}
Since $e_j \in E^1(x)$, it follows from~\eqref{exponents} that
\begin{equation}\label{TBB}
\limsup_{n\to \infty} \frac 1n \log \lVert \cA(x,n)e_j\rVert \le \lambda_i, \quad \text{for $j\in \{1, \ldots, l\}$.}
\end{equation}
Finally, we note that~\eqref{TAA} and~\eqref{TBB} readily imply~\eqref{res}.
\end{proof}

It follows from~\eqref{res} that for $\epsilon >0$,
\begin{equation}\label{defD}
D(x):=\sup_{n\ge 0} \{\lVert \cA(x,n)\rvert E^1(x)\rVert  \cdot e^{-(\lambda_i+\epsilon)n}\} <\infty,
\end{equation}
for $\mu$ a.e.\ $x\in X$.
\begin{lemma}
We have
\begin{equation}\label{temp}
\lim_{n \to \pm \infty}\frac{1}{n} \log D(f^n(x))=0 \quad \text{for $\mu$-a.e. $x\in X$.}
\end{equation}
\end{lemma}

\begin{proof}[Proof of the lemma]
For $n\ge 1$, we have
\[
\begin{split}
\lVert \cA(x,n)\rvert E^1(x)\rVert  &\le \lVert \cA(f(x), n-1)\rvert E^1(f(x))\rVert \cdot \lVert A(x)\rvert E^1(x)\rVert  \\
&\le \lVert \cA(f(x), n-1)\rvert E^1(f(x))\rVert \cdot \lVert A(x)\rVert.
\end{split}
\]
By multiplying the above inequality by $e^{-(\lambda_i+\epsilon)n}$, we obtain
\[
e^{-(\lambda_i+\epsilon)n}\lVert \cA(x,n)\rvert E^1(x)\rVert \le e^{-(\lambda_i+\epsilon)(n-1)}\lVert \cA(f(x), n-1)\rvert E^1(f(x))\rVert \cdot e^{-(\lambda_i+\epsilon)}\lVert A(x)\rVert.
\]
Hence,
\[
D(x) \le D(f(x))\cdot \max \{e^{-(\lambda_i+\epsilon)}\lVert A(x)\rVert, 1\}.
\]
It follows from~\eqref{G} that there exists  an nonnegative and  integrable function $\psi \colon X \to \R$ such that
\begin{equation}\label{X}
\log D(x)- \log D(f(x))\le \psi(x).
\end{equation}
Set
\[
\tilde D(x)=\log D(x)- \log D(f(x)).
\]
We note that
\begin{equation}\label{Z1}
\frac{1}{n}\log D(f^n(x))=\frac{1}{n}\log D(x)-\frac{1}{n}\sum_{j=0}^{n-1}\tilde D(f^j(x)),
\end{equation}
for each $x\in X$ and $n\in \N$. By~\eqref{X}, we have that $\tilde D^+$ is integrable. Hence, we can apply the  Birkhoff ergodic theorem (see~p.539 \cite{A}) and conclude that there exists $a\in [-\infty, \infty)$ such that
\begin{equation}\label{Z2}
\lim_{n\to \infty} \frac{1}{n} \sum_{j=0}^{n-1}\tilde D(f^j(x))=a,
\end{equation}
for $\mu$-a.e. $x\in X$. It follows from~\eqref{Z1} and~\eqref{Z2} that
\[
\lim_{n\to \infty} \frac{1}{n}\log D(f^n(x))=-a.
\]
On the other hand, since $\mu$ is $f$-invariant, for any $c>0$ we have that
\[
\lim_{n\to \infty} \mu (\{x\in X: \log D(f^n(x))/n \ge c\})=\lim_{n\to \infty} \mu (\{x\in X: \log D(x) \ge nc\})=0,
\]
which immediately implies that $a\ge 0$. Thus,
\[
\lim_{n\to \infty} \frac{1}{n} \log D(f^n(x))\le 0.
\]
Since $D(x)\ge 1$ for $\mu$ a.e.\ $x\in X$  by (\ref{defD}), we conclude that~\eqref{temp} holds when $n\to \infty$.

Now we establish~\eqref{temp} for the case $n\to -\infty$. Set
\[
 D'(x)=\log D(f^{-1}(x))- \log D(x).
\]
Obviously,
\begin{equation}\label{Z11}
\frac{1}{n}\log D(f^{-n}(x))=\frac{1}{n}\log D(x)+\frac{1}{n}\sum_{j=0}^{n-1} D'(f^{-j}(x)),
\end{equation}
for each $x\in X$ and $n\in \N$.  By~\eqref{X}, we have that $D'^+$ is integrable. Hence, we can apply the Birkhoff ergodic theorem  and conclude that there exists $a\in [-\infty, \infty)$ such that
\begin{equation}\label{Z22}
\lim_{n\to \infty} \frac{1}{n} \sum_{j=0}^{n-1} D'(f^{-j}(x))=a,
\end{equation}
for $\mu$-a.e. $x\in X$. Once can now proceed as in the previous case and obtain that $a=0$, which implies~\eqref{temp}.
\end{proof}
It follows from~\eqref{temp} and Proposition 4.3.3(ii) in~\cite{A} that there exists a nonnegative and measurable function $C$ defined on a set of full-measure  satisfying the first inequality in~\eqref{3} such that $D(x)\le C(x)$ which together with~\eqref{defD} implies that~\eqref{1} holds.

\textsl{Step 2 -- Lower bound for growth on $E^2$ and temperedness of the function $1/C$:}
We now show (\ref{2}).
By Theorem~\ref{t1}, the cocycle $\cA$ is invertible along the subbundle $E^2$.
We would like to apply Step 1 to the inverse of the  cocycle obtained by restricting $\cA$ onto the subbundle $E^2$  to conclude that~\eqref{2} holds for some function $C$ satisfying the first inequality in~\eqref{3} on a set of full measure.
In order to do this, we first require the integrability condition of Lemma \ref{inv_integ}.
The arguments in the proof of Lemma \ref{11216} are partly inspired by those in the proof of Corollary 3.8 \cite{blumenthal_young}.
\begin{lemma}\label{11216}
\label{inv_integ}
We have
\[
\int_X \log^+ \lVert (A(x)\rvert E^2(x))^{-1}\rVert \, d\mu(x) < \infty.
\]
\end{lemma}

\begin{proof}[Proof of the lemma]
Take an arbitrary $v\in E^2(x)$ such that $\lVert v\rVert=1$ and find an orthonormal basis $\{v_1, \ldots, v_m\}$ of $E^2(x)$ such that $v_1=v$. Then,
\[
\lvert \det (A(x)\rvert E^2(x))\rvert \le \lVert A(x)v\rVert \cdot \prod_{i=2}^m \lVert A(x)v_i\rVert \le \lVert A(x)v\rVert \cdot \lVert A(x)\rVert^{m-1}.
\]
Hence,
\begin{equation}\label{EA}
\lVert A(x)v\rVert \ge \lVert A(x)\rVert^{1-m} \cdot \lvert \det (A(x)\rvert E^2(x))\rvert .
\end{equation}
Moreover, by~\eqref{EA} we have
\[
\lVert (A(x)\rvert E^2(x))^{-1}\rVert=\sup_{w\in E^2(fx), \|w\|=1}\|A(x)^{-1}w\|=\sup_{v\in E^2(x), \lVert v\rVert=1}\frac{1}{\lVert A(x)v\rVert} \le \frac{\lVert A(x)\rVert^{m-1}}{\lvert \det (A(x)\rvert E^2(x))\rvert},
\]
and therefore
\[
\log \lVert (A(x)\rvert E^2(x))^{-1}\rVert \le (m-1)\log \lVert A(x)\rVert -\log \lvert \det (A(x)\rvert E^2(x))\rvert.
\]
In  view of~\eqref{G}, setting
\begin{equation}\label{psi}
\psi(x)=\log \lvert \det (A(x)\rvert E^2(x))\rvert,
\end{equation}
it remains to prove $\psi^-\in L^1(\mu)$.
We first note that $\psi (x)\le \log \lVert A(x)\rVert^m=m \log \lVert A(x)\rVert$, which together with~\eqref{G} implies that $\psi^+\in L^1(\mu)$. It follows from Birkhoff's ergodic theorem that there exists $a\in \R \cup \{-\infty\}$ such that
\[
a=\lim_{n\to \infty} \frac 1 n\sum_{i=0}^{n-1} \psi (f^i(x))=\lim_{n\to \infty} \frac 1n \sum_{i=1}^n \psi (f^{-i}(x)),
\]
for $\mu$-a.e. $x\in X$. We note that
\[
\begin{split}
a=\lim_{n\to \infty} \frac 1n \sum_{i=1}^n \psi (f^{-i}(x)) &=\lim_{n\to \infty} \frac 1n \sum_{i=1}^n \log \lvert \det (A(f^{-i}(x))\rvert E^2(f^{-i}(x)))\rvert \\
&=\lim_{n\to \infty} \frac 1n \log \lvert \det (\cA(f^{-n}(x), n\rvert E^2(f^{-n}(x))) \rvert \\
&= \lim_{n\to \infty} \frac 1n \log \frac{1}{\lvert \det (\cA(f^{-n}(x), n\rvert E^2(f^{-n}(x)))^{-1} \rvert}.
\end{split}
\]
Let again $\{v_1, \ldots, v_m\}$ be an orthonormal basis for $E^2(x)$. Then by going backwards in~\eqref{LE} (which is possible by Lemma 20 \cite{FLQ2}),
we can find $\lambda \in \R$ such that for sufficiently large $n$,
\[
\lvert \det (\cA(f^{-n}(x), n)\rvert E^2(f^{-n}(x)))^{-1} \rvert \le \prod_{i=1}^m \lVert (\cA(f^{-n}(x), n)\rvert E^2(f^{-n}(x)))^{-1}  v_i\rVert  \le e^{mn\lambda},
\]
which implies that $a\ge -m\lambda$ and thus $a\in \R$. Moreover, by Kingman's subadditive ergodic theorem
\[
a=\inf_{n\in  \N} \frac 1n \int_X \log \lvert \det (\cA(f^{n-1}(x), n)\rvert E^2(x))\rvert   \, d\mu (x) \le \int_X \log \lvert \det (A(x)\rvert E^2(x))\rvert \, d\mu(x),
\]
which implies the integrability of $\psi^-$.
\end{proof}

To finish this step, we now establish the existence of function $C$ satisfying~\eqref{2} and the first inequality in~\eqref{3}. By Theorem~\ref{t1}, the map  $A(x)$ is invertible along the direction $E^2(x)$ and we  will denote the inverse of this map by $A^{-1}(x)$. Let $\mathcal B$ be a cocycle over $f^{-1}$ defined on a subbundle $E^2(x)$ with generator $A^{-1} \circ f^{-1}$. It follows from Theorem~\ref{t1}, Lemma~\ref{11216} and  Lemma 20~\cite{FLQ2} that the  Lyapunov exponents of the cocycle $\cB$ are given by
\[
-\lambda_k < \ldots < -\lambda_{i+1}.
\]
Furthermore,  $E_j(x)$ is the Oseledets subspace corresponding to $-\lambda_j$ for $i+1\le j \le k$.
We can now apply Step 1 to $\cB$ to
conclude that that there exists a function $C\colon \Lambda \to (0, \infty)$ such that
\begin{equation}\label{Y1}
\lVert \mathcal B(x, n)v\rVert \le C(x)e^{(-\lambda_{i+1}+\frac{\epsilon}{2})n}, \quad \text{for $x\in \Lambda$, $n\ge 0$ and $v\in E_{j+1}(x)\oplus \ldots \oplus E_k(x)$}
\end{equation}
and
\begin{equation}\label{Y2}
C(f^m(x))\le C(x)e^{\frac{\epsilon}{2}  \lvert m\rvert}, \quad \text{for $x\in \Lambda$ and $m\in \Z$.}
\end{equation}
It follows readily from~\eqref{Y1} and~\eqref{Y2} that~\eqref{2} holds.


\textsl{Step 3 -- Lower bound for $K$ and temperedness of $K$: }

The existence of function $K$ satisfying~\eqref{A} and~\eqref{3} follows by applying Lemma~\ref{ang} successively.
For $i=k-1$, it is sufficient to apply Lemma~\ref{ang} to $E(x)=E_1(x)\oplus \ldots \oplus E_{k-1}(x)$ and $F(x)=E_k(x)$ using the  properties~\eqref{1} (both for $E(x)$ and $E(x)\oplus F(x)$) and~\eqref{2} from Steps 1 and 2.
For $i=k-2$, we again apply Lemma~\ref{ang} to $E(x)=E_1(x)\oplus \ldots \oplus E_{k-1}(x)$ and $F(x)=E_k(x)$ and obtain a  function $K_1$ as in the statement of Lemma~\ref{ang}.
Further, we apply Lemma~\ref{ang} to $E(x)=E_1(x)\oplus \ldots \oplus E_{k-2}(x)$ and $F(x)=E_{k-1}(x)$ and obtain a function $K_2$ as in the statement of Lemma~\ref{ang}. Take now an arbitrary $v\in E_1(x)\oplus \ldots \oplus E_{k-2}(x)$ and $w\in E_{k-1}(x)\oplus E_k(x)$, $w=w_1+w_2$, $w_1\in E_{k-1}(x)$, $w_2\in E_k(x)$. By~\eqref{angles5},
\begin{equation}\label{I1}
\lVert v\rVert \le K_2(x)\lVert v+w_1\rVert \le K_1(x)K_2(x)\lVert v+w\rVert.
\end{equation}
Similarly,
\[
\lVert w_2\rVert \le K_1(x)\lVert v+w\rVert
\]
and
\[
\lVert w_1\rVert \le K_2(x)\lVert v+w_1\rVert \le K_1(x)K_2(x)\lVert v+w \rVert.
\]
Hence,
\begin{equation}\label{I2}
\lVert w\rVert \le 2\max \{K_1(x), K_1(x)K_2(x)\} \lVert v+w\rVert.
\end{equation}
It follows from~\eqref{I1} and~\eqref{I2} that~\eqref{A} holds for $K(x)=2\max \{K_1(x), K_1(x)K_2(x)\}$, which in view of~\eqref{KM}  satisfies the second inequality in~\eqref{3}  with $\epsilon$ replaced by some $a\epsilon$ for some $a>0$ (this is possible since $\epsilon >0$ can be made arbitrarily small). Proceeding inductively, we can establish the appropriate bounds for the angle in the general case when $1\le i\le k$.
\end{proof}

As we have already noted in the introduction, Theorem~\ref{t3} plays a central role in the proof of our main result, however, it is also a result of independent interest. For example, it shows that the noninvertible cocycles with all nonzero Lyapunov exponents are nonuniformly hyperbolic in the sense of Pesin (see~\cite{BP} for details) on a set of full-measure.
Furthermore, it shows that the notion of a nonuniform exponential dichotomy for not necessarily invertible discrete time dynamics (introduced by Barreira and Valls in~\cite{BV}) is ubiquitous in the context of ergodic theory.
In order to formulate an explicit result, we recall that we say that a sequence  $(A_n)_{n\in \Z}$ of operators on $\R^d$ admits a \emph{nonuniform exponential dichotomy} if:
\begin{enumerate}
\item
there exist projections $P_n\colon \R^d \to \R^d$ for each $n\in \Z$ satisfying
\[
A_n P_n=P_{n+1}A_n
\]
for $n\in \Z$ such that each map
\[
A_n \rvert \ker P_n \colon \ker P_n \to \ker P_{n+1}
\]
is invertible;
\item
there exist a constants $D, \lambda >0$ and $\epsilon \ge 0$
such that
\[
\lVert \cA(m,n)P_n \rVert \le De^{-\lambda (m-n)+ \epsilon \lvert n\rvert} \quad \text{for} \quad m\ge n
\]
and
\[
\lVert \cA(m,n)Q_n \rVert \le De^{-\lambda (n-m)+ \epsilon \lvert n\rvert }\quad \text{for} \quad m\le n,
\]
where $Q_n=\Id-P_n$ and
\[
\cA(m,n)=\left(\cA(n,m)\rvert \ker P_m\right)^{-1}\colon \ker P_n \to \ker P_m
\]
for $m<n$.
\end{enumerate}

The following result is a direct consequence of Theorem~\ref{t3}.

\begin{theorem}
\label{mainb}
Let $\cA$ be a cocycle satisfying~\eqref{G} with nonvanishing Lyapunov exponents.
Then, there exists a Borel set $\Lambda \subset X$ of full $\mu$-measure such that for each $x\in \Lambda$  the sequence $(A_n)_{n\in \Z}$ defined by $A_n=A(f^n(x))$, $n\in \Z$ admits a nonuniform exponential dichotomy.
\end{theorem}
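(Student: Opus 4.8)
The plan is to read the dichotomy off directly from Theorem~\ref{t3}. Since $\mu$ is $f$-invariant, after intersecting the full-measure set of Theorem~\ref{t3} with all of its iterates we may assume that $\Lambda$ is $f$-invariant, so that for $x\in\Lambda$ the Oseledets splitting \eqref{OS} and all the estimates of Theorem~\ref{t3} hold at every point $f^n(x)$, $n\in\Z$. Because the exponents do not vanish there is a unique index $i\in\{0,1,\dots,k\}$ with $\lambda_i<0<\lambda_{i+1}$, using the convention $\lambda_0=-\infty$, $\lambda_{k+1}=+\infty$. I would treat the generic case $1\le i\le k-1$ first: fix $\epsilon>0$ small enough that $\lambda_i+\epsilon<0<\lambda_{i+1}-\epsilon$ (if $i=1$ and $\lambda_1=-\infty$, replace $\lambda_1$ by a fixed number in $(-\infty,0)\subset(-\infty,\lambda_2)$, as permitted in \eqref{1}), apply Theorem~\ref{t3} with this $i$ to obtain $C,K\colon\Lambda\to(0,\infty)$, fix $x\in\Lambda$, and set $A_n=A(f^n(x))$, so that the propagator satisfies $\cA(m,n)=\cA(f^n(x),m-n)$ for $m\ge n$.

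Next I would set up the projections. For $n\in\Z$ let $P_n\colon\R^d\to\R^d$ be the projection onto $E^1(f^n(x))$ along $E^2(f^n(x))$ and $Q_n=\Id-P_n$; this is legitimate since \eqref{OS2} gives $\R^d=E^1(f^n(x))\oplus E^2(f^n(x))$. By \eqref{inv} one has $A_nE^1(f^n(x))\subseteq E^1(f^{n+1}(x))$, and since $\cA$ is invertible along $E^2$ (Theorem~\ref{t1}; cf.\ Step~2 in the proof of Theorem~\ref{t3}) $A_n$ maps $\ker P_n=E^2(f^n(x))$ isomorphically onto $\ker P_{n+1}=E^2(f^{n+1}(x))$. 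Decomposing an arbitrary vector along \eqref{OS2} then gives $A_nP_n=P_{n+1}A_n$, which is the first requirement in the definition of a nonuniform exponential dichotomy and also makes $\cA(m,n)\colon\ker P_n\to\ker P_m$ well defined for $m<n$.

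For the exponential bounds, take any $v\in\R^d$. If $m\ge n$ then $P_nv\in E^1(f^n(x))$, so \eqref{1} at $f^n(x)$ together with $\lVert P_nv\rVert\le K(f^n(x))\lVert v\rVert$ (from \eqref{A}) gives
\[
\lVert \cA(m,n)P_nv\rVert=\lVert\cA(f^n(x),m-n)P_nv\rVert\le C(f^n(x))K(f^n(x))\,e^{(\lambda_i+\epsilon)(m-n)}\lVert v\rVert .
\]
If $m\le n$, put $u=\cA(m,n)Q_nv\in E^2(f^m(x))$, so that $\cA(f^m(x),n-m)u=Q_nv$; then \eqref{2} at $f^m(x)$ yields $\lVert Q_nv\rVert\ge \frac{1}{C(f^m(x))}e^{(\lambda_{i+1}-\epsilon)(n-m)}\lVert u\rVert$, and together with $\lVert Q_nv\rVert\le K(f^n(x))\lVert v\rVert$ this gives
\[
\lVert \cA(m,n)Q_nv\rVert\le C(f^m(x))K(f^n(x))\,e^{-(\lambda_{i+1}-\epsilon)(n-m)}\lVert v\rVert .
\]
Finally, invoking the temperedness \eqref{3} and the elementary inequality $|m|\le|n|+(n-m)$ valid when $m\le n$, both right-hand sides are bounded by $C(x)K(x)\,e^{2\epsilon|n|}e^{-\lambda|m-n|}\lVert v\rVert$ with $\lambda:=\min\{-\lambda_i,\lambda_{i+1}\}-2\epsilon>0$; this is exactly the required dichotomy with constants $D=C(x)K(x)$, exponent $\lambda$ and nonuniformity $2\epsilon$, and since $\epsilon$ was arbitrary the generic case is complete.

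The two degenerate cases need only cosmetic changes. If $i=k$ (all exponents negative) then $E^2\equiv\{0\}$, $P_n=\Id$ for all $n$, and only the first estimate is needed. If $i=0$ (all exponents positive, hence finite) then \eqref{inv} forces $A$ to be invertible $\mu$-a.e.; applying the above, with $i$ replaced by the top index, to the inverse cocycle over $f^{-1}$ (whose exponents $-\lambda_k<\dots<-\lambda_1$ are all negative) and transcribing the resulting decay bound into a lower bound for $\cA$ on all of $\R^d$ yields the dichotomy with $P_n=0$, $Q_n=\Id$. I expect the only delicate points to be the bookkeeping with the tempered factors — one must ensure that the nonuniformity genuinely appears as $e^{\epsilon|n|}$ rather than, say, $e^{\epsilon|m|}$, which is where $|m|\le|n|+|m-n|$ is used — and the verification of the algebraic identity $A_nP_n=P_{n+1}A_n$ and of the invertibility of $A_n|\ker P_n$ in the absence of global invertibility; everything else is a direct translation of Theorem~\ref{t3}.
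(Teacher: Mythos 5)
Your proof is correct and follows exactly the route the paper intends: the paper gives no written proof, asserting only that the theorem is ``a direct consequence of Theorem~\ref{t3}'', and your argument is precisely the fleshing-out of that consequence (projections along the splitting $E^1\oplus E^2$, the bounds \eqref{1}, \eqref{2}, \eqref{A} combined with the temperedness \eqref{3}, plus the inequality $|m|\le|n|+(n-m)$ to place the nonuniformity at the correct index). The treatment of the degenerate cases $i=k$ and $i=0$ is also sound.
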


\section{An  Oseledets splitting of the adjoint cocycle}
The other crucial ingredient in the proof of H\"older continuity of the Oseledets splitting (Theorem \ref{main}) is the use of the adjoint cocycle.
Suppose that $\cA$ is a cocycle over $f$ whose generator $A$ satisfies~\eqref{G}. Moreover, assume that  the Lyapunov exponents of $\cA$ and the corresponding Oseledets decomposition are given by~\eqref{LE} and~\eqref{OS}.  We denote by  $\mathcal A^*$  the cocycle over $f^{-1}$ with generator $A^*\circ f^{-1}$. The following result  identifies Lyapunov exponents and the Oseledets splitting of the cocycle $\mathcal A^*$. We note that this  theorem is well-known for invertible matrix cocycles (see Theorem 5.1.1 \cite{A} for example).

\begin{theorem}\label{t2}
The Lyapunov exponents of the cocycle $\cA^*$ are given by~\eqref{LE}.  Furthermore, the Oseledets subspace that corresponds to  $\lambda_i$ is given by
\begin{equation}\label{Fi}
\bigg{(} \bigoplus_{j\neq i} E_j(x) \bigg{)}^{\perp}.
\end{equation}
\end{theorem}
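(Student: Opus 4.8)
The plan is to prove that the family of subspaces
\[
F_i(x):=\Big(\bigoplus_{j\neq i}E_j(x)\Big)^{\perp},\qquad i=1,\dots,k,
\]
is a measurable, $\cA^{*}$-invariant splitting of $\R^d$ along whose $i$-th summand $\cA^{*}$ grows exactly at rate $\lambda_i$, and then to conclude by uniqueness of the Oseledets decomposition. First note that Theorem~\ref{t1} applies to $\cA^{*}$ over the ergodic, invertible, $\mu$-preserving map $f^{-1}$, since $\lVert A^{*}(x)\rVert=\lVert A(x)\rVert$, so $A^{*}\circ f^{-1}$ satisfies~\eqref{G}. Unwinding the cocycle identity gives $\cA^{*}(x,n)=\cA(f^{-n}x,n)^{*}$, hence $\langle \cA^{*}(x,n)v,u\rangle=\langle v,\cA(f^{-n}x,n)u\rangle$ and $\lVert\cA^{*}(x,n)v\rVert=\sup_{\lVert u\rVert=1}\langle v,\cA(f^{-n}x,n)u\rangle$. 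Because $\cA(y,n)E_j(y)\subset E_j(f^ny)$, the subspaces $F_i(x)$ have $\dim F_i(x)=\dim E_i(x)$ and $\bigcap_i F_i(x)^{\perp}=\bigcap_i\bigoplus_{j\neq i}E_j(x)=\{0\}$, so $\R^d=\bigoplus_{i=1}^k F_i(x)$; they depend measurably on $x$; and, since $A(f^{-1}x)$ maps $\bigoplus_{j\neq i}E_j(f^{-1}x)$ into $\bigoplus_{j\neq i}E_j(x)$, one has $A^{*}(f^{-1}x)F_i(x)\subset F_i(f^{-1}x)$. Finally, the elementary identity $F_i(x)\cap E_i(x)^{\perp}=\{0\}$ (take orthogonal complements: $F_i(x)^{\perp}+E_i(x)=\bigoplus_j E_j(x)=\R^d$) makes the pairing $F_i(x)\times E_i(x)\to\R$, $(v,w)\mapsto\langle v,w\rangle$, nondegenerate, so there is a measurable $c_i\colon X\to(0,\infty)$ such that every unit $v\in F_i(x)$ admits a unit $w\in E_i(x)$ with $\langle v,w\rangle\ge c_i(x)$.

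The heart of the matter is a two-sided exponential estimate for $\cA^{*}$ along $F_i$. Let $P_i(x)$ be the projection of $\R^d$ onto $E_i(x)$ along $\bigoplus_{j\neq i}E_j(x)$. Combining the angle bounds~\eqref{A} at the cuts $i-1$ and $i$, exactly as in Step~3 of the proof of Theorem~\ref{t3}, yields a tempered measurable function dominating $\lVert P_i(\cdot)\rVert$; and running Step~1 of that proof (in particular Lemma~\ref{9216}) with the single Oseledets block $E_i$ in place of $E^1$ produces a tempered $D_i$ with $\lVert\cA(y,n)\rvert E_i(y)\rVert\le D_i(y)e^{(\lambda_i+\epsilon)n}$ for $n\ge0$. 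For $v\in F_i(x)$, since $\cA(f^{-n}x,n)u$ decomposes along $\bigoplus_j E_j(x)$ and $v$ kills the $E_j$-components with $j\neq i$, we get $\langle v,\cA(f^{-n}x,n)u\rangle=\langle v,P_i(x)\cA(f^{-n}x,n)u\rangle$, and $P_i(x)\cA(f^{-n}x,n)=\cA(f^{-n}x,n)P_i(f^{-n}x)$; hence
\[
\lVert\cA^{*}(x,n)v\rVert\le\lVert v\rVert\cdot\lVert\cA(f^{-n}x,n)\rvert E_i(f^{-n}x)\rVert\cdot\lVert P_i(f^{-n}x)\rVert,
\]
and temperedness of $D_i$ and of $\lVert P_i(\cdot)\rVert$ along the backward orbit gives $\limsup_n\frac1n\log\lVert\cA^{*}(x,n)v\rVert\le\lambda_i$ for $\mu$-a.e.\ $x$ (when $i=1$ and $\lambda_1=-\infty$, one uses the version of part~(1) of Theorem~\ref{t3} with $\lambda_1$ replaced by an arbitrary real number $<\lambda_2$, and lets that number tend to $-\infty$). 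For the reverse inequality, suppose $\lambda_i>-\infty$, so $\cA(f^{-n}x,n)$ restricts to an isomorphism $E_i(f^{-n}x)\to E_i(x)$; applying Step~1 of the proof of Theorem~\ref{t3} to the inverse cocycle along $E_i$, as in Step~2 (the needed integrability being immediate from Lemma~\ref{11216}), yields a measurable $C_i\colon X\to(0,\infty)$ with $\lVert(\cA(f^{-n}x,n)\rvert E_i(f^{-n}x))^{-1}\rVert\le C_i(x)e^{(-\lambda_i+\epsilon)n}$. Taking a unit $w\in E_i(x)$ with $\langle v,w\rangle\ge c_i(x)\lVert v\rVert$ and letting $u$ be the normalised preimage of $w$ under $\cA(f^{-n}x,n)\rvert E_i(f^{-n}x)$, we obtain
\[
\lVert\cA^{*}(x,n)v\rVert\ge\langle v,\cA(f^{-n}x,n)u\rangle\ge\frac{c_i(x)\lVert v\rVert}{C_i(x)}\,e^{(\lambda_i-\epsilon)n},
\]
so $\liminf_n\frac1n\log\lVert\cA^{*}(x,n)v\rVert\ge\lambda_i$. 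Letting $\epsilon\downarrow0$ gives $\lim_n\frac1n\log\lVert\cA^{*}(x,n)v\rVert=\lambda_i$ for every $v\in F_i(x)\setminus\{0\}$ and $\mu$-a.e.\ $x$ (when $\lambda_1=-\infty$ the same holds with limit $-\infty$, by the upper bound alone).

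This exhibits a measurable $\cA^{*}$-invariant splitting $\R^d=\bigoplus_{i=1}^kF_i(x)$ whose $k$ summands carry the $k$ distinct exponents $\lambda_1<\dots<\lambda_k$; by uniqueness of the Oseledets decomposition in Theorem~\ref{t1} applied to $\cA^{*}$, the Lyapunov exponents of $\cA^{*}$ are precisely $\lambda_1<\dots<\lambda_k$ and $F_i(x)$ is the Oseledets subspace corresponding to $\lambda_i$. The main obstacle is the upper bound in the previous paragraph: there one genuinely needs the subexponential growth, along both directions of the orbit, of the oblique projections $P_i$ and of $\cA$ restricted to a single Oseledets block $E_i$ --- facts not provided by the semi-invertible MET itself and precisely what Theorem~\ref{t3} (together with the temperedness arguments in its proof) was set up to deliver. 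The lower bound, by contrast, is comparatively soft: the backward estimate it uses is evaluated at the base point $x$ itself, so it requires no temperedness, only the bound from Step~2 of Theorem~\ref{t3} and the elementary nondegeneracy $F_i(x)\cap E_i(x)^{\perp}=\{0\}$.
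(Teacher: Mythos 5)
Your proof is correct and follows essentially the same route as the paper: define $F_i(x)$ as the orthogonal complements, verify $\cA^{*}$-invariance and that they split $\R^d$, and derive the two-sided growth rate from the identity $\cA^{*}(x,n)=\cA(f^{-n}x,n)^{*}$ together with the growth, angle, and temperedness estimates of Theorem~\ref{t3}. The only (immaterial) variation is in the lower bound, where you use the inverse cocycle along $E_i$ anchored at $x$, whereas the paper applies the forward bound~\eqref{H5} at $f^{-n}(x)$ and compensates with the temperedness~\eqref{H4}.
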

\begin{proof}
Let $F_i(x)$ be a subspace of $\R^d$ given by~\eqref{Fi}. It follows from~\eqref{inv} that
\[
A(x)\bigg{(}\bigoplus_{j\neq i} E_j(x) \bigg{)} \subset \bigoplus_{j\neq i} E_j(f(x)),
\]
which readily implies that $A^*(f^{-1}(x))F_i(x) \subset F_i(f^{-1}(x))$ for each $i\in \{1, \ldots, k\}$.
 Furthermore, the subspaces $F_i(x)$ form a direct sum. Indeed, assume that
\[v\in F_i(x) \cap (F_1(x)+ \ldots F_{i-1}(x)+F_{i+1}(x) + \ldots + F_k(x)), \] and   write $v$ in the form $v=v_1+\ldots +v_k$, where $v_j \in E_j(x)$ for $j=1, \ldots, k$. Since \[ v\in F_i(x) \quad  \text{and} \quad  v_1+\ldots +v_{i-1}+v_{i+1}+\ldots +v_k\in \bigoplus_{j\neq i}E_j(x),\] we conclude that $\langle v, v_1+\ldots +v_{i-1}+v_{i+1}+\ldots +v_k \rangle=0$.
On the other hand, since \[ v\in F_1(x)+ \ldots F_{i-1}(x)+F_{i+1}(x) + \ldots + F_k(x)\subset E_i(x)^{\perp},\] we also have that $\langle v, v_i\rangle =0$.  Hence, $\langle v, v\rangle =0$ and $v=0$. We now want to prove that
\[
\R^d=\bigoplus_{i=1}^k F_i(x).
\]
In order to prove the above equality we are going to show that $(\oplus_{i=1}^k F_i(x))^{\perp}=\{0\}$.  Take $v\in (\oplus_{i=1}^k F_i(x))^{\perp}=\cap_{i=1}^kF_i(x)^\perp$
and write it in a form $v=v_1+\ldots +v_k$, where $v_i \in E_i(x)$, $i=1, \ldots, k$. Since $v$ and $v_2+\ldots +v_k$ both belong to
$F_1(x)^\perp$, we conclude that $v_1\in F_1(x)^\perp$. However, since the subspaces $E_j(x)$ form a direct sum of $\R^d$ this implies that $v_1=0$. Similarly, we obtain that $v_j=0$ for $j=2, \ldots, k$ and thus $v=0$. In order to complete the proof of the theorem, we are going to show that for $\mu$-a.e. $x\in X$,
\begin{equation}\label{H1}
\lim_{n\to \infty} \frac 1n \log \lVert \cA^*(x,n)u\rVert =\lambda_i, \quad \text{for $u\in F_i(x)\setminus \{0\}$ and $i\in \{1, \ldots, k\}$.}
\end{equation}
Take $i\ge 2$ and $u\in F_i(x)\setminus \{0\}$. We first note that $\cA^*(x,n)=\cA(f^{-n}(x), n)^*$. Hence,
\begin{equation}\label{H2}
 \lVert \cA^*(x,n)u\rVert=\max_{\lVert v\rVert=1} \lvert \langle \cA^*(x,n)u, v\rangle \rvert =\max_{\lVert v\rVert=1} \lvert \langle u, \cA(f^{-n}(x), n)v\rangle \rvert.
\end{equation}
For $v\in E_i(f^{-n}(x))$, $\lVert v\rVert=1$, one has $\cA(f^{-n}(x), n)v \in E_i(x)$.
Since $u\in F_i(x)\setminus\{0\}$, setting
\[
c=c(x)=\frac 1 2\sup \{ \lvert \langle w_1, w_2\rangle \rvert:  w_1\in F_i(x), \ w_2 \in E_i(x), \ \lVert w_1\rVert=\lVert w_2\rVert=1 \},
\] we have
\begin{equation}\label{H3}
\lVert \cA^*(x,n)u\rVert=\max_{\lVert v\rVert=1} \lvert \langle u, \cA(f^{-n}(x), n)v\rangle \rvert \ge c\lVert u\rVert \cdot \lVert \cA(f^{-n}(x), n)v\rVert.
\end{equation}
On the other hand, it follows from Theorem~\ref{t3} that  for each $\epsilon >0$,  there exists a measurable function $C\colon \Lambda \to (0, \infty)$ defined on a Borel set $\Lambda \subset X$ of full $\mu$-measure such that
\begin{equation}\label{H4}
C(f^n(x))\le C(x)e^{\epsilon \lvert n\rvert} \quad \text{for $x\in \Lambda$ and $n\in \Z$,}
\end{equation}
and
\begin{equation}\label{H5}
\lVert \cA(x,n)w\rVert \ge \frac{1}{C(x)}e^{(\lambda_i-\epsilon)n}\lVert w\rVert \quad \text{for $x\in \Lambda$, $w\in E_i(x)$ and $n\ge 0$.}
\end{equation}
It follows from~\eqref{H4} and~\eqref{H5} that
\begin{equation}\label{H6}
\lVert \cA(f^{-n}(x), n)v\rVert \ge \frac{1}{C(x)} e^{(\lambda_i-2\epsilon)n}, \quad \text{for $n\ge 0$.}
\end{equation}
By~\eqref{H2}, \eqref{H3} and~\eqref{H6}, we have
\[
\liminf_{n\to \infty} \frac 1 n\log \lVert \cA^*(x,n)u\rVert \ge \lambda_i-2\epsilon.
\]
Since $\epsilon >0$ was arbitrary, we conclude that
\begin{equation}\label{H7}
\liminf_{n\to \infty}\frac 1n \log \lVert \cA^*(x,n)u\rVert \ge \lambda_i.
\end{equation}
Now take an arbitrary $v\in \R^d$, $\lVert v\rVert=1$ and write it as $v=v_1+\ldots +v_k$, where $v_j\in E_j(f^{-n}(x))$ for $j=1, \ldots, k$. Since $u\in F_i(x)$, we have that $\langle u, \cA(f^{-n}(x), n)v_j\rangle =0$ for $j\neq i$. Hence,
\begin{equation}\label{H8}
\lvert \langle u, \cA(f^{-n}(x), n)v\rangle \rvert =\lvert \langle u, \cA(f^{-n}(x), n)v_i\rangle \rvert \le \lVert u\rVert \cdot \lVert \cA(f^{-n}(x), n)v_i\rVert.
\end{equation}
Furthermore, it follows from Theorem~\ref{t3} that  for each $\epsilon >0$ there exists measurable functions $C, K \colon \Lambda \to (0, \infty)$ defined on a Borel set $\Lambda$ of full $\mu$-measure such that  for every $x\in \Lambda$ and $n\in \Z$ we have:
\begin{equation}\label{H9}
\lVert \cA(x, n)w\rVert \le C(x)e^{(\lambda_i+\epsilon)n}\lVert w\rVert \quad \text{for  $w\in E_i(x)$, $n\ge 0$,}
\end{equation}
\begin{equation}\label{H10}
\lVert w_i\rVert \le K(x)\lVert w_1+w_2\rVert \quad \text{for $i\in \{1, 2\}$,  $w_1\in E_i(x)$ and $w_2\in \oplus_{j\neq i} E_j(x)$,}
\end{equation}
and
\begin{equation}\label{H11}
C(f^n(x)) \le C(x)e^{\epsilon \lvert n\rvert} \quad \text{and} \quad K(f^n(x))\le K(x)e^{\epsilon \lvert n\rvert}.
\end{equation}
We note that the existence of the function $K$ can be easily  deduced from the appropriate bounds for the angles between $\oplus_{j=1}^i E_j(x)$ and $\oplus_{j=i+1}^k E_j(x)$ as well as $\oplus_{j=1}^{i-1} E_j(x)$ and $E_i(x)$ (see Lemma~\ref{ang} and the proof of Theorem~\ref{t3}).
Thus,
\[
\begin{split}
\lVert \cA(f^{-n}(x), n)v_i\rVert &\le C(f^{-n}(x))e^{(\lambda_i+\epsilon)n}\lVert v_i\rVert \\
&\le C(f^{-n}(x))K(f^{-n}(x))e^{(\lambda_i+\epsilon)n}\lVert v\rVert \\
&\le C(x)K(x)e^{(\lambda_i+3\epsilon)n}.
\end{split}
\]
Hence,  using~\eqref{H2} and~\eqref{H8} we obtain
\[
\limsup_{n\to \infty}  \frac 1 n\log \lVert \cA^*(x,n)u\rVert \le \lambda_i+3\epsilon.
\]
Since $\epsilon >0$ was arbitrary, we conclude that
\begin{equation}\label{H12}
\limsup_{n\to \infty}  \frac 1 n\log \lVert \cA^*(x,n)u\rVert \le \lambda_i.
\end{equation}
Obviously, \eqref{H7} and~\eqref{H12} imply~\eqref{H1}. Now we discuss the case $i=1$. If $\lambda_1> -\infty$ then one can repeat the above arguments and establish~\eqref{H1} for $i=1$ also. If $\lambda_1=-\infty$, then one can repeat the second estimates and obtain that
\[
\limsup_{n\to \infty}  \frac 1 n\log \lVert \cA^*(x,n)u\rVert \le L,
\]
for $u\in F_1(x)$, where $L$ is arbitrary real number. By letting $L\to -\infty$, we establish~\eqref{H1} in this situation also.
\end{proof}

\section{H\"{o}lder continunity of Oseledets splitting}
In this section we prove that the Oseledets subspaces of a H\"{o}lder continuous cocycle $A$ are H\"{o}lder continuous on a set of arbitrarily large measure in $X$.
For a  subspace $A\subset \mathbb R^d$ and a vector  $v\in \mathbb R^d$ we define
\[
d(v,A)=\inf \{\lVert v-w\rVert: w\in A\}.
\]
Furthermore, for two subspaces $V$ and $W$ of $\mathbb R^d$ we define the distance between them by
\[
d(V,W)=\max \bigg{\{}\sup_{w\in W, \lVert w\rVert=1} d(w, V), \sup_{v\in V, \lVert v\rVert=1} d(v, W)\bigg{\}}.
\]
It turns out that the quantity $d(V,W)$ can be expressed in an equivalent  form which will be more suitable for our purposes.  Let $P_V \colon \mathbb R^d \to \mathbb R^d$ and $P_W \colon \mathbb R^d \to \mathbb R^d$ be orthogonal projections onto $V$ and $W$ respectively.  The following lemma is well-known;  see e.g.\ page 111 in~\cite{AG}.
\begin{lemma}\label{l1}
For any two subspaces $V,$ and $W$ of $\R^d$ we have that
\[
d(V, W)=\lVert P_V-P_W\rVert.
\]
\end{lemma}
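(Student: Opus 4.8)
The plan is to establish the two inequalities $d(V,W)\le \lVert P_V-P_W\rVert$ and $\lVert P_V-P_W\rVert\le d(V,W)$ separately; combined they give the claim.

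For the first inequality, I would note that if $v\in V$ with $\lVert v\rVert=1$, then $P_Vv=v$, and since the point of $W$ nearest to $v$ is $P_Wv$, we get $d(v,W)=\lVert v-P_Wv\rVert=\lVert P_Vv-P_Wv\rVert=\lVert(P_V-P_W)v\rVert\le\lVert P_V-P_W\rVert$. The symmetric computation with a unit vector $w\in W$ gives $d(w,V)\le\lVert P_V-P_W\rVert$. Taking suprema over unit vectors in $V$ and in $W$ yields $d(V,W)\le\lVert P_V-P_W\rVert$.

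For the reverse inequality, set $\delta=d(V,W)$. First I would record a ``dual'' estimate: for every $y\in V^\perp$ one has $\lVert P_Wy\rVert\le\delta\lVert y\rVert$. Indeed, putting $w=P_Wy$ and using that $y\perp V$ (so $\langle P_Vw,y\rangle=0$), we get $\lVert w\rVert^2=\langle w,y\rangle=\langle(\Id-P_V)w,y\rangle\le\lVert(\Id-P_V)w\rVert\,\lVert y\rVert$; since $\lVert(\Id-P_V)w\rVert=d(w,V)\le\delta\lVert w\rVert$ by the definition of $\delta$, this forces $\lVert w\rVert\le\delta\lVert y\rVert$. Likewise, $d(v,W)=\lVert v-P_Wv\rVert\le\delta\lVert v\rVert$ for every $v\in V$. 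Now take an arbitrary unit vector $x\in\R^d$ and split it as $x=v+y$ with $v=P_Vx\in V$ and $y=x-P_Vx\in V^\perp$. Then $(P_V-P_W)x=(v-P_Wv)-P_Wy$, where $v-P_Wv=(\Id-P_W)v\in W^\perp$ and $P_Wy\in W$ are mutually orthogonal, so by the Pythagorean identity and the two estimates above, $\lVert(P_V-P_W)x\rVert^2=\lVert v-P_Wv\rVert^2+\lVert P_Wy\rVert^2\le\delta^2\lVert v\rVert^2+\delta^2\lVert y\rVert^2=\delta^2\lVert x\rVert^2=\delta^2$. Taking the supremum over unit vectors $x$ gives $\lVert P_V-P_W\rVert\le\delta=d(V,W)$.

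The main obstacle is the second inequality, and within it the dual estimate $\lVert P_Wy\rVert\le\delta\lVert y\rVert$ for $y\in V^\perp$: the definition of $d(V,W)$ directly controls $\Id-P_V$ on $W$ and $\Id-P_W$ on $V$, but to bound $(P_V-P_W)x$ for a general $x$ one must transfer this to $P_W$ acting on $V^\perp$, which is exactly what the short inner-product argument accomplishes. Once that estimate and the orthogonal decomposition $(P_V-P_W)x=(v-P_Wv)-P_Wy$ are in place, the remainder is a one-line Pythagorean computation.
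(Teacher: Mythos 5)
Your proof is correct. Note that the paper does not actually prove Lemma~\ref{l1} --- it simply cites it as well known (page 111 of the Akhiezer--Glazman reference) --- so there is no in-paper argument to compare against. Your two-inequality argument is the standard complete proof: the easy direction via $d(v,W)=\lVert(P_V-P_W)v\rVert$ for unit $v\in V$, and the reverse direction via the dual estimate $\lVert P_W y\rVert\le d(V,W)\,\lVert y\rVert$ for $y\in V^{\perp}$ combined with the orthogonal decomposition $(P_V-P_W)x=(\Id-P_W)P_Vx-P_W(\Id-P_V)x$; all steps check out.
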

Let $X$ now be a metric space with a metric $\rho$ and let $\Lambda \subset X$.  We say that the family   $E(x)$, $x\in \Lambda$ of subspaces of $\R^d$  is  \emph{H\"{o}lder continuous} on $\Lambda $ if there exist $L, \epsilon_0>0$ and $\beta \in (0, 1]$ such that
\[
d(E(x), E(y))\le L\rho(x, y)^{\beta},  \quad \text{for every $x, y\in \Lambda$ satisfying $\rho(x, y)\le \epsilon_0.$}
\]
We now introduce the notion of H\"{o}lder continuous cocycle. We say that the cocycle $\cA$ is \emph{H\"{o}lder continuous} if there exist $C, \nu >0$ such that
\[
\lVert A(x)-A(y)\rVert \le C\rho(x,y)^{\nu}, \quad \text{for $x, y\in X$.}
\]

The following two simple lemmas will be particulary useful.

\begin{lemma}\label{l2}
Let $\Lambda \subset X$. A family $E(x)$, $x\in \Lambda$ of subspaces of $\R^d$ is H\"{o}lder continuous if and only if the family $E(x)^{\perp}$, $x\in \Lambda$ is  H\"{o}lder continuous.
\end{lemma}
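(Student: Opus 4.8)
The plan is to reduce everything to Lemma~\ref{l1}, which expresses the distance $d(V,W)$ between two subspaces as the operator norm $\lVert P_V - P_W\rVert$ of the difference of the corresponding orthogonal projections. The one elementary ingredient I would record first is that for any subspace $V \subset \R^d$ the orthogonal projection onto $V^\perp$ is precisely $\Id - P_V$; this is immediate from the orthogonal decomposition $\R^d = V \oplus V^\perp$.

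Given this, for two subspaces $V, W \subset \R^d$ I would simply compute
\[
P_{V^\perp} - P_{W^\perp} = (\Id - P_V) - (\Id - P_W) = P_W - P_V,
\]
so that $\lVert P_{V^\perp} - P_{W^\perp}\rVert = \lVert P_V - P_W\rVert$. Applying Lemma~\ref{l1} to the pair $(V^\perp, W^\perp)$ and to the pair $(V,W)$ then gives the exact identity $d(V^\perp, W^\perp) = d(V, W)$.

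Taking $V = E(x)$ and $W = E(y)$, this yields $d(E(x)^\perp, E(y)^\perp) = d(E(x), E(y))$ for all $x, y \in \Lambda$. Consequently the defining Hölder inequality holds for the family $\{E(x)^\perp\}_{x\in\Lambda}$ with exactly the same constants $L, \epsilon_0, \beta$ as for $\{E(x)\}_{x\in\Lambda}$, and the converse follows in the same way since $(E(x)^\perp)^\perp = E(x)$. I do not expect any genuine obstacle here: the only point requiring (trivial) care is the identification $P_{V^\perp} = \Id - P_V$, after which the statement is a one-line consequence of Lemma~\ref{l1}.
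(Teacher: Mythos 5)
Your proposal is correct and is essentially identical to the paper's proof: both use $P_{E(x)^\perp}=\Id-P(x)$ together with Lemma~\ref{l1} to get the exact identity $d(E(x)^\perp,E(y)^\perp)=d(E(x),E(y))$, from which the equivalence is immediate.
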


\begin{proof}
Let $P(x)$ denote the orthogonal  projection onto $E(x)$ for $x\in \Lambda$. Then, $\Id-P(x)$ is an orthogonal projection onto $E(x)^{\perp}$. Hence, it follows from  Lemma~\ref{l1} that
\[
\begin{split}
d(E(x), E(y))=\lVert P(x)-P(y)\rVert &= \lVert (\Id-P(x))-(\Id-P(y))\rVert  \\
&=d(E(x)^{\perp}, E(y)^{\perp}),
\end{split}
\]
for every $x, y\in \Lambda$. The conclusion of the lemma now follows directly from the definition of the H\"{o}lder continuity.
\end{proof}

\begin{lemma}\label{l3}
Let $\Lambda \subset X$  and assume that $E(x)$, $F(x)$, $x\in \Lambda$ are two families of subspaces of $\R^d$ such that:
\begin{enumerate}
\item  the subspaces $E(x)$ and $F(x)$ are orthogonal for each $x\in \Lambda$;
\item the family $E(x)$, $x\in \Lambda$ is H\"{o}lder continuous;
\item  the family $E(x)\oplus F(x)$, $x\in \Lambda$ is H\"{o}lder continuous.
\end{enumerate}
Then,  the family $F(x)$, $x\in \Lambda$ is also  H\"{o}lder continuous.
\end{lemma}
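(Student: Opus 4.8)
The plan is to use orthogonal projections together with Lemma~\ref{l1} to reduce everything to an estimate on operator norms. Write $P_E(x)$, $P_F(x)$, and $P_{E\oplus F}(x)$ for the orthogonal projections onto $E(x)$, $F(x)$, and $E(x)\oplus F(x)$ respectively. The key algebraic observation is that, since $E(x)$ and $F(x)$ are orthogonal by hypothesis~(1), the subspace $E(x)\oplus F(x)$ is in fact an orthogonal direct sum, so the orthogonal projection onto it splits as
\[
P_{E\oplus F}(x)=P_E(x)+P_F(x)\quad\text{for every }x\in\Lambda.
\]
Consequently $P_F(x)=P_{E\oplus F}(x)-P_E(x)$, and hence for any $x,y\in\Lambda$,
\[
\lVert P_F(x)-P_F(y)\rVert\le \lVert P_{E\oplus F}(x)-P_{E\oplus F}(y)\rVert+\lVert P_E(x)-P_E(y)\rVert.
\]

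By Lemma~\ref{l1}, the left-hand side equals $d(F(x),F(y))$, while the two terms on the right equal $d(E(x)\oplus F(x),E(y)\oplus F(y))$ and $d(E(x),E(y))$ respectively. Now I would invoke hypotheses~(2) and~(3): there exist constants $L_1,\epsilon_1>0$, $\beta_1\in(0,1]$ with $d(E(x),E(y))\le L_1\rho(x,y)^{\beta_1}$ whenever $\rho(x,y)\le\epsilon_1$, and constants $L_2,\epsilon_2>0$, $\beta_2\in(0,1]$ with $d(E(x)\oplus F(x),E(y)\oplus F(y))\le L_2\rho(x,y)^{\beta_2}$ whenever $\rho(x,y)\le\epsilon_2$. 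Setting $\epsilon_0=\min\{\epsilon_1,\epsilon_2,1\}$, $\beta=\min\{\beta_1,\beta_2\}$, and $L=L_1+L_2$, and using that $\rho(x,y)\le 1$ forces $\rho(x,y)^{\beta_j}\le\rho(x,y)^{\beta}$, we obtain
\[
d(F(x),F(y))\le L\rho(x,y)^{\beta}\quad\text{whenever }\rho(x,y)\le\epsilon_0,
\]
which is precisely H\"older continuity of the family $F(x)$, $x\in\Lambda$.

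The only genuinely substantive point is the identity $P_{E\oplus F}=P_E+P_F$ under orthogonality, and this is elementary: for $v\in\mathbb{R}^d$ one checks that $P_E v+P_F v$ lies in $E(x)\oplus F(x)$ and that $v-(P_Ev+P_Fv)$ is orthogonal to both $E(x)$ and $F(x)$, hence to their span, so $(P_E+P_F)v$ is the orthogonal projection of $v$ onto $E(x)\oplus F(x)$ by the characterization of orthogonal projection. Everything else is bookkeeping with the definition of H\"older continuity; there is no real obstacle here, and in particular no dynamics or cocycle structure is needed — this is a purely linear-algebraic lemma whose role is to transfer H\"older regularity between complementary orthogonal subbundles, to be applied later in combination with Lemma~\ref{l2} and the adjoint cocycle of Theorem~\ref{t2}.
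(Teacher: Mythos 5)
Your proof is correct and follows essentially the same route as the paper's: both establish $P_{E\oplus F}=P_E+P_F$ from the orthogonality hypothesis, apply the triangle inequality to $P_F=P_{E\oplus F}-P_E$, and translate back to subspace distances via Lemma~\ref{l1}. Your additional care with combining possibly different H\"older exponents (taking $\beta=\min\{\beta_1,\beta_2\}$ and restricting to $\rho(x,y)\le 1$) is a detail the paper leaves implicit but is entirely sound.
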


\begin{proof}
Let $P(x)$ be an orthogonal projection onto $E(x)$ and let $Q(x)$ be an orthogonal projection onto $F(x)$ for $x\in \Lambda$. Since $E(x)$ and $F(x)$ are orthogonal, we have that $P(x)+Q(x)$ is an orthogonal projection onto $E(x)\oplus F(x)$. Hence, it follows from Lemma~\ref{l1} that
\[
\begin{split}
d(F(x), F(y)) = \lVert Q(x)-Q(y)\rVert &\le \lVert (P(x)+Q(x))-(P(y)+Q(y))\rVert  \\
&\phantom{\le}+\lVert P(x)-P(y)\rVert \\
&=d(E(x)\oplus F(x), E(y)\oplus F(y)) \\
&\phantom{=}+d(E(x), E(y)),
\end{split}
\]
for every $x, y\in \Lambda$.
The H\"{o}lder continuity of the family $F(x)$ follows directly from the H\"{o}lder continuity of families $E(x)\oplus F(x)$ and $E(x)$.
\end{proof}

We will  use the following two auxiliary results from~\cite{ABF} which are slight generalisations of the original work of Brin~\cite{B} (see also Lemmas 5.3.4. and 5.3.5. in~\cite{BP}).

\begin{lemma}\label{Brin}
Let $(A_n)_{n\ge 1}$, $(B_n)_{n\ge 1}$ be two sequences of real matrices of order $d>0$ such that for some $0< \lambda <\mu$ and $C\ge 1$ there exist subspaces $E, E', F, F'$ of $\R^d$ satisfying
$\R^d=E\oplus E'=F\oplus F'$ such that:
\begin{enumerate}
\item $\lVert A_n u\rVert \le C\lambda^n \lVert u\rVert$  for $u\in E$ and $C^{-1} \mu^n \lVert v\rVert \le \lVert A_n v\rVert$ for $v\in E'$;
\item $\lVert B_n u\rVert \le C\lambda^n \lVert u\rVert$  for $u\in F$ and $C^{-1} \mu^n \lVert v\rVert \le \lVert B_n v\rVert$ for $v\in F'$;
\item $\max \{\lVert v\rVert, \lVert w\rVert \} \le d\lVert v+w\rVert$ for $v\in E$, $w\in E'$ or $v\in F$, $w\in F'$.
\end{enumerate}
Then for each pair $(\delta, a) \in (0, 1] \times [\lambda, +\infty)$ satisfying
\[
\bigg{(}\frac{\lambda}{a}\bigg{)}^{n+1} < \delta \le \bigg{(}\frac{\lambda}{a}\bigg{)}^n \quad \text{and} \quad \lVert A_n-B_n \rVert \le \delta a^n,
\]
we have that $d(E, F) \le (2+d)C^2\frac{\mu}{\lambda} \delta^{\log (\mu/ \lambda) / \log (a/ \lambda)}$.
\end{lemma}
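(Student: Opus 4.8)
The plan is to bound $d(E,F)$ directly from the definition, namely as the larger of $\sup\{d(u,F):u\in E,\ \|u\|=1\}$ and $\sup\{d(v,E):v\in F,\ \|v\|=1\}$. Since hypotheses (1)--(3) together with the closeness bound are completely symmetric under interchanging the triples $(A_n,E,E')$ and $(B_n,F,F')$, it is enough to estimate the first supremum; the second is then handled by the identical computation with the roles of the two cocycles reversed. So I fix a unit vector $u\in E$ and decompose it along $\R^d=F\oplus F'$ as $u=v+w$ with $v\in F$ and $w\in F'$, so that $d(u,F)\le\|u-v\|=\|w\|$. Everything reduces to proving $\|w\|\le(2+d)C^2\frac{\mu}{\lambda}\,\delta^{\log(\mu/\lambda)/\log(a/\lambda)}$.

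To this end I would squeeze $\|w\|$ by a two-sided estimate evaluated at ``time $n$'', where $n$ is the integer pinned down by $(\lambda/a)^{n+1}<\delta\le(\lambda/a)^n$. From below, $w\in F'$ and hypothesis (2) give $C^{-1}\mu^n\|w\|\le\|B_nw\|$. From above, $B_nw=B_nu-B_nv$; here $\|B_nv\|\le C\lambda^n\|v\|\le Cd\lambda^n$ since $v\in F$ and, by hypothesis (3), $\|v\|\le d\|v+w\|=d$, while $\|B_nu\|\le\|A_nu\|+\|(A_n-B_n)u\|\le C\lambda^n+\delta a^n$ using the contraction of $A_n$ on $E$ and $\|A_n-B_n\|\le\delta a^n$. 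Combining these gives $C^{-1}\mu^n\|w\|\le C(1+d)\lambda^n+\delta a^n$, and since $\delta\le(\lambda/a)^n$ yields $\delta a^n\le\lambda^n$, the right-hand side is at most $\big(C(1+d)+1\big)\lambda^n$. Multiplying by $C\mu^{-n}$ and using $C\ge1$ collapses this to $\|w\|\le(2+d)C^2(\lambda/\mu)^n$.

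The final step is bookkeeping: converting the exponent $n$ into a power of $\delta$. From $(\lambda/a)^{n+1}<\delta$ and $\lambda<a$ one gets $n>\log(1/\delta)/\log(a/\lambda)-1$, and since $\lambda/\mu<1$ makes $(\lambda/\mu)^n$ decreasing in $n$, we obtain $(\lambda/\mu)^n<(\lambda/\mu)^{\log(1/\delta)/\log(a/\lambda)-1}=\frac{\mu}{\lambda}\,\delta^{\log(\mu/\lambda)/\log(a/\lambda)}$, the last equality being the logarithmic identity $(\lambda/\mu)^{\log(1/\delta)/\log(a/\lambda)}=\delta^{\log(\mu/\lambda)/\log(a/\lambda)}$. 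Substituting into the previous bound gives the desired estimate for $\|w\|$, hence the same bound for $\sup\{d(u,F):u\in E,\ \|u\|=1\}$, and by symmetry for $\sup\{d(v,E):v\in F,\ \|v\|=1\}$; taking the maximum proves the lemma.

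I do not anticipate a real obstacle. The argument is the standard Brin-type device of comparing the ``bad'' component of a slow vector of one cocycle against the slow/fast splitting of the other, and all the inequalities involved are elementary. The only points needing care are correctly identifying the integer $n$ associated with the pair $(\delta,a)$, checking that the inequality $\delta a^n\le\lambda^n$ is precisely what absorbs the perturbation term, and tracking constants so that the factor $(2+d)C^2\mu/\lambda$ comes out exactly rather than with a slightly worse coefficient.
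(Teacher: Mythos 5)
Your proof is correct: the two-sided estimate on the $F'$-component $w$ of a unit vector $u\in E$ (expansion of $B_n$ on $F'$ from below, versus $\lVert B_nu\rVert+\lVert B_nv\rVert$ from above, with the perturbation term absorbed via $\delta a^n\le\lambda^n$), the constant bookkeeping $C(C(1+d)+1)\le(2+d)C^2$, and the conversion of $(\lambda/\mu)^n$ into $\frac{\mu}{\lambda}\delta^{\log(\mu/\lambda)/\log(a/\lambda)}$ all check out, and the symmetry argument for the other half of $d(E,F)$ is legitimate. The paper itself does not prove this lemma but imports it from Ara\'ujo--Bufetov--Filip (cf.\ Lemma 5.3.4 in \cite{BP}); your argument is exactly that standard Brin-type proof.
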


\begin{lemma}\label{Brin2}
Assume that $\cA$ is a H\"{o}lder continuous cocycle and that there exists $L>0$ such that $f$ is Lipschitz with constant $L$ and such that $\lVert \cA(x, n)\rVert \le L^n$ for $n\ge 0$ and $x$ in some fixed compact set $\Lambda \subset X$.  Then, there exist $C, \nu >0$ such that
$\lVert \cA(x,n)-\cA(y,n)\rVert \le C^n d(x,y)^{\nu}$ for $x, y\in \Lambda$ and $n\ge 0$.
\end{lemma}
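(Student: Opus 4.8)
The plan is to compare the $n$-fold products $\cA(x,n)=A(f^{n-1}x)\cdots A(x)$ and $\cA(y,n)=A(f^{n-1}y)\cdots A(y)$ by the standard telescoping trick. Without loss of generality I would assume $L\ge 1$, since replacing $L$ by $\max\{L,1\}$ keeps $f$ Lipschitz with constant $L$ and preserves the bound $\lVert\cA(x,n)\rVert\le L^n$ on $\Lambda$. Let $\tilde C,\nu>0$ be the constants from the H\"older continuity of $\cA$, so that $\lVert A(u)-A(v)\rVert\le\tilde C\,d(u,v)^\nu$ for all $u,v\in X$; iterating the Lipschitz estimate for $f$ gives $d(f^ku,f^kv)\le L^k d(u,v)$ for $u,v\in X$ and $k\ge 0$.

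The first step is the telescoping identity: for $x,y\in\Lambda$ and $n\ge 1$,
\[
\cA(x,n)-\cA(y,n)=\sum_{k=0}^{n-1}\cA(f^{k+1}x,\,n-1-k)\bigl(A(f^kx)-A(f^ky)\bigr)\cA(y,k),
\]
obtained by writing the difference as $\sum_{k=0}^{n-1}(u_k-u_{k+1})$ with $u_k=A(f^{n-1}x)\cdots A(f^kx)\,A(f^{k-1}y)\cdots A(y)$ (so that $u_0=\cA(x,n)$ and $u_n=\cA(y,n)$) and using the cocycle relation to recognise the partial products as $\cA(f^{k+1}x,n-1-k)$ and $\cA(y,k)$. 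Here one uses that the forward orbit segment $x,fx,\dots,f^nx$ lies in $\Lambda$, so the growth bound of the hypothesis applies to each partial product; this is automatic when the lemma is applied with $\Lambda$ forward $f$-invariant, and otherwise one simply restricts attention to orbit segments contained in $\Lambda$.

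The second step is to estimate each summand. By hypothesis $\lVert\cA(f^{k+1}x,n-1-k)\rVert\le L^{n-1-k}$ and $\lVert\cA(y,k)\rVert\le L^k$, while H\"older continuity of $\cA$ together with the iterated Lipschitz bound gives
\[
\lVert A(f^kx)-A(f^ky)\rVert\le\tilde C\,d(f^kx,f^ky)^\nu\le\tilde C\,L^{k\nu}d(x,y)^\nu .
\]
Multiplying and using $L\ge 1$, the $k$-th term is at most $\tilde C\,L^{n-1-k}L^kL^{k\nu}d(x,y)^\nu=\tilde C\,L^{n-1+k\nu}d(x,y)^\nu\le\tilde C\,L^{(n-1)(1+\nu)}d(x,y)^\nu$, so summing the $n$ terms yields $\lVert\cA(x,n)-\cA(y,n)\rVert\le n\,\tilde C\,L^{(n-1)(1+\nu)}\,d(x,y)^\nu$. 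Finally, since $n\,\tilde C\,L^{(n-1)(1+\nu)}$ grows at most geometrically in $n$, one picks $C>0$ large enough that $n\,\tilde C\,L^{(n-1)(1+\nu)}\le C^n$ for all $n\ge 1$ (any $C>L^{1+\nu}$ works for all large $n$, then enlarge $C$ to cover the finitely many small values); the case $n=0$ is trivial since $\cA(x,0)=\cA(y,0)=\Id$. This gives $\lVert\cA(x,n)-\cA(y,n)\rVert\le C^nd(x,y)^\nu$ for $x,y\in\Lambda$ and $n\ge 0$, with $\nu$ the same H\"older exponent as that of $\cA$. The argument is elementary throughout; the only points worth isolating are the telescoping identity and the (trivial) observation that the polynomial prefactor $n$ is harmless once a geometric factor $C^n$ is permitted, so there is no genuine obstacle.
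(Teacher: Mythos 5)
The paper does not actually prove this lemma; it is quoted from~\cite{ABF} and goes back to Brin (cf.\ Lemma 5.3.5 in~\cite{BP}). Your telescoping argument is precisely the standard proof of such statements, and the algebra is correct: the identity $\cA(x,n)-\cA(y,n)=\sum_{k=0}^{n-1}\cA(f^{k+1}x,n-1-k)\bigl(A(f^kx)-A(f^ky)\bigr)\cA(y,k)$ is right, the factor $\cA(y,k)$ is controlled by the hypothesis since $y\in\Lambda$, the middle factor is controlled by the H\"older continuity of the generator together with $d(f^kx,f^ky)\le L^kd(x,y)$, and absorbing the polynomial prefactor $n$ into $C^n$ is harmless.

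The one genuine weak point is the bound $\lVert\cA(f^{k+1}x,n-1-k)\rVert\le L^{n-1-k}$. The hypothesis gives $\lVert\cA(z,m)\rVert\le L^m$ only for $z\in\Lambda$, and in the intended application $\Lambda=\Lambda_l$ is a compact set of large measure, not an $f$-invariant set, so $f^{k+1}x$ will in general leave $\Lambda$. Your proposed remedy --- ``restrict attention to orbit segments contained in $\Lambda$'' --- changes the statement being proved: the conclusion is asserted for all $x,y\in\Lambda$ and all $n\ge0$. Moreover, for non-invertible $A$ the norm of a tail product $\cA(f^{k+1}x,n-1-k)$ is \emph{not} controlled by the norms of the full products $\cA(x,m)$: a rank-deficient $A(x)$ followed by large expansion transverse to its image gives bounded $\cA(x,m)$ for all $m$ but arbitrarily large tails. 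So as written this step does not follow from the stated hypotheses. The honest fixes are: (i) read the hypothesis as holding along forward orbits of $\Lambda$ --- this is what is actually available in the paper's application, since the function $\tilde C$ in~\eqref{Z} can be taken tempered as in~\eqref{3}, whence $\lVert\cA(f^{k}x,m)\rVert\le le^{\epsilon k}e^{(\lambda_k+\epsilon)m}$ for $x\in\Lambda_l$, costing only an enlargement of the base $C$; or (ii) if $\bigcup_{n\ge0}f^n(\Lambda)$ is bounded (e.g.\ $X$ compact), bound the tail products crudely by $\bigl(\sup\lVert A\rVert\bigr)^{n-1-k}$ over the closure of that set. Either way your skeleton goes through unchanged; you should state explicitly which of these you are invoking rather than gesturing at invariance.
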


We now arrive at our main result.
\begin{theorem}\label{main}
Let $\cA$ be a H\"{o}lder continuous cocycle satisfying~\eqref{G} whose Lyapunov exponents and the corresponding  Oseledets splitting are given by~\eqref{LE} and~\eqref{OS}. Then, for each $i\in \{1, \ldots, k\}$ and  $\delta >0$ there exists a compact set $\Lambda \subset X$ of measure $\mu(\Lambda)>1-\delta$ such that
the map $x\mapsto E_i(x)$ is H\"{o}lder continuous on $\Lambda$.
\end{theorem}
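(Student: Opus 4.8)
\textsl{Strategy.} The plan is to write each Oseledets space as a transversal intersection $E_i(x)=V_i(x)\cap U_i(x)$ of two partial flags $V_i(x)=\bigoplus_{j\le i}E_j(x)$, $U_i(x)=\bigoplus_{j\ge i}E_j(x)$ (note $V_i(x)+U_i(x)=\R^d$), to prove that $x\mapsto V_i(x)$ and $x\mapsto U_i(x)$ are H\"older on compact sets of measure $>1-\delta$ --- the former via $\cA$, the latter via the adjoint cocycle $\cA^*$ --- and to extract $E_i(x)$ from the two associated orthogonal projections. One may first assume all Lyapunov exponents in play are positive: replacing $\cA$ by the scalar rescaling $(e^{cn}\cA(x,n))_n$ changes neither the Oseledets subspaces, nor the H\"older hypothesis, nor~\eqref{G}, and $\lambda_1$ (if $=-\infty$) may be replaced by any number in $(-\infty,\lambda_2)$ by Theorem~\ref{t3}.

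\textsl{H\"older continuity of $V_i$.} Fix $i$ and a small $\epsilon>0$. Using Theorem~\ref{t3} (with index $i$, and also with $i=k$ to bound $\lVert\cA(x,\cdot)\rVert$) together with a Lusin-type argument, pass to a compact set of measure $>1-\delta$ on which the tempered functions $C,K$ of Theorem~\ref{t3} are uniformly bounded and $\lVert\cA(x,n)\rVert\le L^n$ for all $n\ge 0$. On such a set, \eqref{1}, \eqref{2} and~\eqref{A} are precisely the hypotheses of Lemma~\ref{Brin} for the matrix sequence $(\cA(x,n))_n$, with $\R^d=E^1(x)\oplus E^2(x)$, $\lambda=e^{\lambda_i+\epsilon}$, $\mu=e^{\lambda_{i+1}-\epsilon}$ and a uniform constant, while Lemma~\ref{Brin2} supplies $\widehat C>0$ with $\lVert\cA(x,n)-\cA(y,n)\rVert\le\widehat C^{\,n}\rho(x,y)^{\nu}$ on the set. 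For $\rho(x,y)$ small I would fix $a>\max\{\lambda,\widehat C\}$, take $n$ maximal with $\rho(x,y)^{\nu}\le(\lambda/a)^{n}$, put $\delta_0=(\lambda/a)^{n}$; then $\lVert\cA(x,n)-\cA(y,n)\rVert\le\widehat C^{\,n}\rho(x,y)^\nu\le a^n\delta_0=\delta_0 a^n$ and $(\lambda/a)^{n+1}<\delta_0\le(\lambda/a)^{n}$, so Lemma~\ref{Brin} gives $d(V_i(x),V_i(y))=d(E^1(x),E^1(y))\le C'\delta_0^{\log(\mu/\lambda)/\log(a/\lambda)}\le C''\rho(x,y)^{\beta}$ with $\beta=\nu\log(\mu/\lambda)/\log(a/\lambda)>0$. (For $i=k$, $V_k\equiv\R^d$.)

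\textsl{H\"older continuity of $U_i$.} Run the previous step for $\cA^*$, which is a H\"older cocycle over the invertible Lipschitz map $f^{-1}$ and satisfies~\eqref{G}. By Theorem~\ref{t2}, the Lyapunov exponents of $\cA^*$ are again~\eqref{LE} and its Oseledets space for $\lambda_m$ is $F_m(x)=(\bigoplus_{j\neq m}E_j(x))^{\perp}$. Since $F_m(x)\perp\bigoplus_{j\ge i}E_j(x)$ for every $m\le i-1$, and the dimensions add up to $d$, one gets the \emph{orthogonal} splitting $\R^d=(\bigoplus_{m\le i-1}F_m(x))\oplus(\bigoplus_{j\ge i}E_j(x))$, i.e.\ $U_i(x)=(\bigoplus_{m\le i-1}F_m(x))^{\perp}$. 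Applying the previous step to $\cA^*$ makes $x\mapsto\bigoplus_{m\le i-1}F_m(x)$ H\"older on a compact set of measure $>1-\delta$, hence so is $x\mapsto U_i(x)$ by Lemma~\ref{l2}. (For $i=1$, $U_1\equiv\R^d$.)

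\textsl{Recovering $E_i$, and the main difficulty.} Intersect the two compact sets above, and shrink once more (Lusin) so that the functions $K$ from Theorem~\ref{t3} taken with the indices $i-1$ and $i$ are both bounded; combining the two angle estimates~\eqref{A} yields a constant $\bar K$ with $\max\{\lVert u\rVert,\lVert v\rVert,\lVert w\rVert\}\le\bar K\lVert u+v+w\rVert$ for $u\in V_{i-1}(x),v\in E_i(x),w\in U_{i+1}(x)$ on the resulting compact set $\Lambda$. There $x\mapsto P_{V_i(x)}$ and $x\mapsto P_{U_i(x)}$ are H\"older (Lemma~\ref{l1}), and the transversality bound forces $P_{V_i(x)}+P_{U_i(x)}\ge c_0\Id$ for some $c_0>0$: if a unit $v$ had $\lVert P_{V_i(x)}v\rVert,\lVert P_{U_i(x)}v\rVert\le\eta$, then writing $v=u'+v'+w'$ as above and noting $\langle v,u'\rangle=\langle P_{V_i(x)}v,u'\rangle$, $\langle v,v'\rangle=\langle P_{V_i(x)}v,v'\rangle$, $\langle v,w'\rangle=\langle P_{U_i(x)}v,w'\rangle$ (since $u',v'\in V_i(x)$, $w'\in U_i(x)$), we would get $1=\langle v,u'+v'+w'\rangle\le 3\bar K\eta$. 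Consequently $(P_{V_i(x)}+P_{U_i(x)})^{-1}$ exists with uniformly bounded norm and is H\"older on $\Lambda$, and the elementary identity $P_{E_i(x)}=2\,P_{V_i(x)}(P_{V_i(x)}+P_{U_i(x)})^{-1}P_{U_i(x)}$ --- valid whenever $V_i(x)+U_i(x)=\R^d$, as one checks by verifying that the right-hand side is self-adjoint, equals $\Id$ on $V_i(x)\cap U_i(x)$, and has range inside it --- exhibits $P_{E_i(x)}$ as a product of bounded H\"older operator-valued maps, so $x\mapsto E_i(x)$ is H\"older on $\Lambda$ by Lemma~\ref{l1}. The step I expect to be the main obstacle is the H\"older continuity of $V_i$: unlike in the invertible setting of~\cite{ABF}, the uniform exponential splitting and uniform finite-time growth needed to apply Lemmas~\ref{Brin} and~\ref{Brin2} are not available from the classical semi-invertible MET (Theorem~\ref{t1}) and must be extracted from Theorem~\ref{t3}; the adjoint identification (Theorem~\ref{t2}) and the transversal-intersection step are then comparatively routine.
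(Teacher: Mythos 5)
Your proposal is correct, and its first two steps coincide with the paper's: H\"older continuity of the fast flag $V_i(x)=\bigoplus_{j\le i}E_j(x)$ via Theorem~\ref{t3} (to get uniform bounds on a compact set of large measure) combined with Lemmas~\ref{Brin} and~\ref{Brin2}, and H\"older continuity of the complementary flag via the adjoint cocycle, Theorem~\ref{t2} and Lemma~\ref{l2} --- including the correct observation that one cannot simply invert $\cA$ as in \cite{ABF}. Where you genuinely diverge is the final step that extracts $E_i(x)$ from the two flags. The paper forms the truncated sequence $A_n=\cA(x,n)P(x)$, with $P(x)$ the orthogonal projection onto $E_i(x)\oplus\cdots\oplus E_k(x)$, verifies that it admits the dichotomy $\R^d=\left(F(x)^{\perp}\oplus E_i(x)\right)\oplus\left(E_{i+1}(x)\oplus\cdots\oplus E_k(x)\right)$ with uniform constants, applies Lemma~\ref{Brin} a second time (estimating $\lVert A_n-B_n\rVert$ via Lemma~\ref{Brin2} and the already-established H\"older continuity of $P$), and finishes with Lemma~\ref{l3}. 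You instead write $E_i(x)=V_i(x)\cap U_i(x)$ and invoke the Anderson--Duffin parallel-sum identity $P_{E_i(x)}=2P_{V_i(x)}\left(P_{V_i(x)}+P_{U_i(x)}\right)^{-1}P_{U_i(x)}$, with the uniform lower bound $P_{V_i(x)}+P_{U_i(x)}\ge c_0\Id$ extracted from the three-way angle estimate furnished by~\eqref{A} at indices $i-1$ and $i$; your verification of that lower bound and of the identity is sound, and the inverse of a uniformly positive H\"older family of operators is H\"older, so the argument closes. What each approach buys: the paper's route reuses the dynamical machinery (a second pass through Lemmas~\ref{Brin} and~\ref{Brin2}) and so transfers verbatim to the Hilbert-space setting treated in the last section; your route is purely linear-algebraic once the two flags and the angle bounds are in hand, avoids the second application of Lemma~\ref{Brin}, and yields an explicit formula for $P_{E_i(x)}$, at the cost of invoking (or re-proving) the parallel-sum identity. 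Both are valid; you have in effect given an alternative, arguably more elementary, Step~3.
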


\begin{proof}
We will divide the proof into several parts.

\textsl{Step 1 -- H\"older continuity of $x \mapsto E_1(x)\oplus \ldots \oplus E_i(x)$:}
We will first show that for each $i\in \{1, \ldots, k\}$ and $\delta >0$, there exists a compact set $\Lambda \subset X$ satisfying  $\mu(\Lambda) > 1-\delta$ and such that  the map  $x\mapsto E_1(x)\oplus \ldots \oplus E_i(x)$ is H\"{o}lder continuous on $\Lambda$. We note that this part is essentially already contained in~\cite{B}, \cite{BP} and~\cite{ABF}; we include it for the sake of completeness.

 Choosing $\epsilon >0$ such that $\lambda_i+\epsilon < \lambda_{i+1}-\epsilon$, it follows from Theorem~\ref{t3} that there exists a Borel set $\Lambda \subset X$ satisfying  $\mu(\Lambda)=1$ and   Borel measurable functions $C, K \colon \Lambda \to (0, \infty)$ such that~\eqref{1},
\eqref{2}, \eqref{A} and~\eqref{3} hold (with the same convention as in the statement of Theorem~\ref{t3} for $\lambda_1$ in~\eqref{1} if $i=1$). Moreover, there exists a  measurable function $\tilde C \colon  \Lambda \to (0, \infty)$ such that
\begin{equation}\label{Z}
\lVert \cA(x, n)\rVert \le \tilde C(x)e^{(\lambda_k+\epsilon)n}, \quad \text{for $x\in  \Lambda$ and $n\ge 0$.}
\end{equation}
For $l\in \N$, let
\[
\Lambda_l=\{x\in \Lambda  : C(x) \le l, \  \tilde C(x) \le l, \ K(x) \le l \}.
\]
It is easy to verify (see~ p.121 \cite{BP}) that each set $\Lambda_l$ is compact. Furthermore, we obviously have that
\[
\Lambda_l \subset \Lambda_{l+1} \quad \text{and} \quad \bigcup_{l=1}^\infty \Lambda_l=\Lambda.
\]
Since $\mu (\Lambda)=1$,  there exists $l\in \N$ such that  $\mu(\Lambda_l)>1-\delta$.  On the other hand, it follows from~\eqref{1}, \eqref{2}, \eqref{A} and~\eqref{3} that for every $x\in \Lambda_l$,
\begin{equation}\label{4}
\lVert \cA(x,n)v\rVert \le le^{(\lambda_i+\epsilon)n} \lVert v\rVert, \quad \text{for $v\in E_1(x) \oplus \ldots \oplus E_i(x)$ and $n\ge 0$;}
\end{equation}
\begin{equation} \label{5}
\frac{1}{l} e^{(\lambda_{i+1}-\epsilon)n} \lVert v\rVert \le \lVert \cA(x,n)v\rVert,  \quad \text{for $v\in E_{i+1}(x)\oplus \ldots \oplus E_k(x)$ and $n\ge 0$;}
\end{equation}
and
\begin{equation}\label{6}
\lVert u\rVert \le l \lVert u+v\rVert \quad  \text{and} \quad \lVert v\rVert \le l \lVert u+v\rVert,
\end{equation}
for $u\in E_1(x)\oplus \ldots \oplus E_i(x)$ and $v\in E_{i+1}(x)\oplus \ldots \oplus E_k(x)$. Moreover, it follows from~\eqref{Z}  that
\begin{equation}\label{R}
\lVert \cA(x,n)\rVert \le le^{(\lambda_k+\epsilon)n}, \quad \text{for every $x\in \Lambda_l$ and $n\ge 0$.}
\end{equation}
Hence, applying Lemmas~\ref{Brin} and~\ref{Brin2} to sequences $A_n=\cA(x, n)$ and $B_n=\cA(y, n)$ with $x, y\in \Lambda_l$, we easily obtain the H\"{o}lder continuity of the map $x\mapsto E_1(x)\oplus \ldots \oplus E_i(x)$ on $\Lambda_l$.

\textsl{Step 2 -- H\"older continuity of $x\mapsto E_{i+1}(x)\oplus \ldots \oplus E_k(x)$:}
We now prove that for each $i\in \{1, \ldots, k-1\}$ and $\delta >0$, there exists a compact  set $\Lambda \subset X$ satisfying  $\mu(\Lambda) > 1-\delta$ and such that  the map  $x\mapsto E_{i+1}(x)\oplus \cdots \oplus E_k(x)$ is H\"{o}lder continuous on $\Lambda$.
We note that
$$\bigg{(} E_{i+1} \oplus \cdots \oplus E_k(x) \bigg{)}^{\perp}=\bigg{(} \bigoplus_{j\neq 1} E_j(x) \bigg{)}^{\perp} \oplus \cdots \oplus  \bigg{(} \bigoplus_{ j\neq i} E_j(x) \bigg{)}^{\perp}=F_1\oplus\cdots\oplus F_i,$$
where $F_j, j=1,\ldots, i$ are the Oseledets spaces of the adjoint cocycle and the second equality follows from Theorem \ref{t2}.
We now apply Step 1 to the adjoint cocycle to find a measurable set $\Lambda \subset X$ satisfying  $\mu(\Lambda) > 1-\delta$ and such that the map
$$
x \mapsto  F_1\oplus\cdots\oplus F_i= \bigg{(} E_{i+1} \oplus \cdots \oplus E_k(x) \bigg{)}^{\perp}$$
is H\"{o}lder continuous on $\Lambda$.
The desired conclusion now follows directly from Lemma~\ref{l2}.
We note that this part is also contained in~\cite{B}, \cite{BP} and~\cite{ABF}, but is obtained by applying the previous step to the inverse of $\cA$.
Such an approach is unavailable to us because the inverse of $\cA$ may not exist.

\textsl{Step 3 -- H\"older continuity of $x\mapsto E_i(x)$:}
In the final part, we prove that for each $i\in \{1, \ldots, k\}$ and $\delta >0$, there exists a compact set $\Lambda \subset X$ satisfying  $\mu(\Lambda) > 1-\delta$ and such that  the map $x\mapsto E_i(x)$ is H\"{o}lder continuous on $\Lambda$. We note that our arguments related to this part of the proof differ significantly from those in~\cite{ABF} and are arguably
simpler. For $i=1$ or $i=k$ there is nothing to prove since the conclusion follows directly from the previous two steps. Take now an arbitrary $i\in \{2, \ldots, k-1\}$ and  $\delta >0$ and let $\Lambda_l$ be the set of measure greater then $1-\delta$ constructed in the first step of the proof. Without loss of the generality we can assume that the mapping $x\mapsto E_i(x) \oplus \ldots \oplus E_k(x)$
is H\"{o}lder continuous on $\Lambda_l$ since otherwise (using Step 2 of the proof) we can pass to a compact subset of $\Lambda_l$ of measure greater then $1-2\delta$ on which this holds.
Let $P(x)$ denote the orthogonal projection onto $F(x):=E_i (x) \oplus \ldots \oplus E_k(x)$.
 For a  point $x\in \Lambda_l$, we define a sequence of matrices $(A_n)_n$ by $A_n=\cA(x,n)P(x)$.
Choose any  $v\in F(x)^{\perp} \oplus E_i(x)$ and write it in the form  $v=v_1+v_2$ where $v_1 \in F(x)^{\perp}$ and $v_2\in E_i(x)$. Using~\eqref{4} and the orthogonality of $F(x)^{\perp}$ and $E_i(x)$,
we conclude that
\[
 \lVert A_n v\rVert =\lVert A_n v_2\rVert =\lVert \cA(x,n)v_2\rVert \le le^{(\lambda_i+\epsilon)n} \lVert v_2\rVert \le le^{(\lambda_i+\epsilon)n} \lVert v\rVert.
\]
Hence,
\begin{equation}\label{N1}
\lVert A_n v\rVert \le l e^{(\lambda_i+\epsilon)n} \lVert v\rVert, \quad \text{for every $v\in F(x)^{\perp} \oplus E_i(x) $ and $n\ge 0$.}
\end{equation}
On the other hand, it follows readily from~\eqref{5} that
\begin{equation}\label{N2}
 \frac{1}{l} e^{(\lambda_{i+1}-\epsilon)n} \lVert v\rVert \le \lVert A_n v\rVert, \quad \text{for every  $v\in E_{i+1}(x)\oplus \ldots \oplus E_k(x)$ and $n\ge 0$.}
\end{equation}
 We now want to establish appropriate bounds for the angles between  $F(x)^{\perp} \oplus E_i(x)$ and $E_{i+1}(x)\oplus \ldots \oplus E_k(x)$.
 Select an arbitrary
$u=u_1+u_2\in F(x)^{\perp} \oplus E_i(x)$ with $u_1\in F(x)^{\perp}$, $u_2\in E_i(x)$, and $v\in E_{i+1}(x)\oplus \ldots \oplus E_k(x)$. By~\eqref{6}, we have  that
\[
 \lVert u+v\rVert^2=\lVert u_1+u_2+v\rVert^2=\lVert u_1\rVert^2+\lVert u_2+v\rVert^2\ge \lVert u_2+v\rVert^2 \ge \frac{1}{l^2}\lVert v\rVert^2,
\]
which implies that
\[
l \lVert u+v\rVert \ge  \lVert v\rVert.
\]
Similarly,
\[
 \lVert u+v\rVert^2=\lVert u_1\rVert^2+\lVert u_2+v\rVert^2 \ge \frac{1}{l^2}(\lVert u_1\rVert^2+\lVert u_2\rVert^2)=\frac{1}{l^2}\lVert u\rVert^2,
\]
which implies that
\[
l \lVert u+v\rVert \ge \lVert u\rVert.
\]
Hence,
\begin{equation}\label{N3}
\max \{\lVert u\rVert, \lVert v\rVert \} \le l\lVert u+v\rVert, \quad \text{for $u\in F(x)^{\perp} \oplus E_i(x)$ and $v\in \bigoplus_{j=i+1}^kE_j(x)$.}
\end{equation}
In order to apply Lemma~\ref{Brin}, we need to  bound the quantity $\lVert A_n-B_n\rVert$, where $B_n=\cA(y,n)P(y)$ and $y$ is some other point in $\Lambda_l$. We have
\begin{eqnarray*}
\lefteqn{\lVert A_n-B_n\rVert}\\
&=&\lVert \cA(x,n)P(x)-\cA(y,n)P(y)\rVert \\
&\le&\lVert \cA(x,n)P(x)-\cA(x,n)P(y)\rVert  +\lVert\cA(x,n)P(y)-\cA(y,n)P(y)\rVert \\
&\le&\lVert \cA(x,n)\rVert \cdot  d(F(x), F(y))+\lVert \cA(x,n)-\cA(y,n)\rVert.
\end{eqnarray*}
By applying Lemma~\ref{Brin2}, \eqref{R} and  the H\"{o}lder continuity of the map $x\mapsto F(x)$  we conclude that there exists $C, \nu >0$ (independent of $x$ and $y$)
 such that
\[
 \lVert A_n-B_n\rVert \le C^n d(x,y)^\nu.
\]
It now follows from Lemma~\ref{Brin} that the map $x\mapsto F(x)^{\perp} \oplus E_i(x)$ is H\"{o}lder continuous  on a set $\Lambda_l$, which in view of Lemma~\ref{l3}  implies the H\"{o}lder continuity of the map
$x\mapsto E_i(x)$ on the same set.

\end{proof}

\section{Cocycles on Hilbert spaces}
In this section we state a generalization of our results to cocycles on Hilbert spaces.
\begin{theorem}
Let $X$ be a Borel subset of a separable complete metric space, $f:X\circlearrowleft$ be a bi-Lipschitz ergodic transformation, $\mathcal{H}$ a Hilbert space, and $A:X\to \mathcal B(\mathcal{H})$ take values in the space of all compact operators.
Further, assume that $A$ satisfies (\ref{G}) and that $x\mapsto A(x)$ is H\"older continuous in the operator norm topology.
Then either:
\begin{enumerate}
\item
There is a finite sequence of numbers (note that below we order Lyapunov exponents and Oseledets spaces in reverse order to (\ref{LE}) and (\ref{OS}) starting with the largest one)
\[
\lambda_1 >\lambda_2 >\cdots >\lambda_k>\lambda_\infty=-\infty
\]
and a decomposition
\[
\mathcal{H}=E_1(x) \oplus \cdots \oplus E_k(x) \oplus E_{\infty}(x)
\]
such that
\[
A(x) E_i(x) = E_i(f(x)), i=1,\ldots,k\quad\mbox{and}\quad A(x) E_\infty(x) \subset E_\infty(f(x))  
\]
and
\[
\lim_{n\to \infty} \frac 1 n \log \lVert \cA(x,n)v\rVert=\lambda_i, \quad \text{for $v\in E_i(x)\setminus \{0\}$,  $i\in \{1,\ldots,k\}\cup \{\infty\}$.}
\]
Moreover, each $E_i(x), i=1,\ldots,k$ is a finite-dimensional subspace of $\mathcal{H}$.
The maps $x\mapsto E_i(x)$, $i=1,\ldots,k$ are H\"{o}lder continuous on a compact set of arbitrarily large measure.
\item There exists an infinite sequence of numbers
\[
\lambda_1 >\lambda_2 >\cdots >\lambda_k > \ldots  >\lambda_{\infty}=-\infty
\]
and a decomposition
\[
\mathcal{H}=E_1(x) \oplus \cdots \oplus E_k(x) \oplus \cdots \oplus E_{\infty}(x)
\]
such that
\[
A(x) E_i(x) = E_i(f(x)), 1\le i<\infty\quad\mbox{and}\quad A(x) E_\infty(x) \subset E_\infty(f(x))
\]
and
\[
\lim_{n\to \infty} \frac 1 n \log \lVert \cA(x,n)v\rVert=\lambda_i, \quad \text{for $v\in E_i(x)\setminus \{0\}$,  $i\in \N \cup \{\infty\}$.}
\]
Moreover, each $E_i(x), i\neq \infty$ is a finite-dimensional subspace of $\mathcal{H}$.
The maps $x\mapsto E_i(x)$, $i\neq \infty$ are H\"{o}lder continuous on a compact set of arbitrarily large measure.
\end{enumerate}
\end{theorem}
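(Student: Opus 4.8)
The first (existence) half of the statement is exactly the conclusion of the semi-invertible multiplicative ergodic theorem for (quasi-)compact operator cocycles over an invertible base, so the dichotomy, the equivariant splitting with $A(x)E_i(x)=E_i(f(x))$ for $i\neq\infty$ and $A(x)E_\infty(x)\subset E_\infty(f(x))$, the exponential growth rates along the $E_i$, and the finite-dimensionality of each $E_i$ with $i\neq\infty$ are supplied by~\cite{Thieullen,LL,FLQ2,GTQ1}. The plan for the H\"older continuity statement is to rerun the arguments of Sections~2--4; the point is that every subspace to which one needs to apply a delicate estimate is either finite-dimensional --- the $E_i$ with $i\neq\infty$ and the ``heads'' $W_i(x):=\bigoplus_{j\leq i}E_j(x)$ --- or is one of the ``tails'' $V_m(x):=\bigoplus_{j\geq m}E_j(x)\oplus E_\infty(x)$, which is a ``slow'' invariant subbundle whose norm along the cocycle is handed to us by the MET in the form $\limsup_n\frac1n\log\lVert\cA(x,n)\rvert V_m(x)\rVert\leq\lambda_m$ (with $\lambda_m$ read as an arbitrary number below $\lambda_k$ when $V_m=E_\infty$); one also has $\limsup_n\frac1n\log\lVert\cA(x,n)\rVert\leq\lambda_1$ from~\eqref{G}.

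First I would prove the Hilbert-space analogue of Theorem~\ref{t3}: for each finite $i$ there is a full-measure Borel set and, for each $\epsilon>0$, tempered functions $C,K,\tilde C$ such that $\lVert\cA(x,n)v\rVert\leq C(x)e^{(\lambda_m+\epsilon)n}\lVert v\rVert$ for $v\in V_m(x)$ and every finite $m$ (and hence for $v$ in any single $E_j(x)$), $\lVert\cA(x,n)v\rVert\geq C(x)^{-1}e^{(\lambda_i-\epsilon)n}\lVert v\rVert$ for $v\in W_i(x)$, a uniform angle bound between $W_i(x)$ and $V_{i+1}(x)$, and $\lVert\cA(x,n)\rVert\leq\tilde C(x)e^{(\lambda_1+\epsilon)n}$. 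The tail bounds come from the $\limsup$ estimate above together with the temperedness argument of Step~1 of Theorem~\ref{t3} (Birkhoff applied to the integrable $\log\lVert A(x)\rVert$, using $\lVert A(x)\rvert V_m(x)\rVert\leq\lVert A(x)\rVert$); note that here Lemma~\ref{9216}, which expands a unit vector of the fibre in an orthonormal basis, is unavailable because $V_m$ is infinite-dimensional, which is precisely why one must appeal to the MET. The lower bound on the \emph{finite-dimensional} invariant subbundle $W_i$ is obtained as in Step~2 of Theorem~\ref{t3}: $\cA$ restricts to an invertible finite-dimensional cocycle on $W_i$, and after checking $\int_X\log^+\lVert(A(x)\rvert W_i(x))^{-1}\rVert\,d\mu<\infty$ (the finite-dimensional determinant estimate of Lemma~\ref{inv_integ} and the reversibility of the Lyapunov spectrum, Lemma~20 of~\cite{FLQ2}) one applies the tail bound just proved to the inverse cocycle. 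The angle bounds follow from Lemma~\ref{ang}, whose proof is valid verbatim in a Hilbert space, combined with an induction mirroring Step~3 of Theorem~\ref{t3} (separate $E_1$ from $V_2$; then, inside $V_2$, separate $E_2$ from $V_3$; and so on, each time feeding Lemma~\ref{ang} the total-norm bound, the tail bound on the current $V_m$, and the backward-MET lower bound on the current finite-dimensional $E_m$); temperedness of $C,K,\tilde C$ follows as in Theorem~\ref{t3}. Since $A^*$ is compact whenever $A$ is, $\lVert A^*(x)\rVert=\lVert A(x)\rVert$, and $f^{-1}$ is again bi-Lipschitz, all of this applies equally to the adjoint cocycle $\cA^*$ over $f^{-1}$; together with the Hilbert-space version of Theorem~\ref{t2} --- whose proof is the same abstract orthogonal-complement and direct-sum computation, carried out with the customary care about closures of infinite direct sums and about the $E_\infty$ component, and which identifies the Oseledets spaces of $\cA^*$ as $F_i(x)=(\bigoplus_{j\neq i}E_j(x))^\perp$ for finite $i$ and $F_\infty(x)=(\bigoplus_{i\text{ finite}}E_i(x))^\perp$, with exponents $\lambda_i$ --- this furnishes the required information about $\cA^*$.

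Next I would carry out the argument of Theorem~\ref{main}. Step~1: for each finite $j$ and each $\delta>0$, on the Pesin-type compact set $\Lambda_l=\{x:C(x),K(x),\tilde C(x)\leq l\}$ of measure $>1-\delta$ one has, for $x\in\Lambda_l$, the uniform bounds $\lVert\cA(x,n)v\rVert\leq le^{(\lambda_{j+1}+\epsilon)n}\lVert v\rVert$ on $V_{j+1}(x)$, $\lVert\cA(x,n)v\rVert\geq l^{-1}e^{(\lambda_j-\epsilon)n}\lVert v\rVert$ on $W_j(x)$, the uniform angle bound between these two subspaces, and $\lVert\cA(x,n)\rVert\leq L^n$ for a suitable $L$; Lemmas~\ref{Brin} and~\ref{Brin2} --- which transfer to a Hilbert space once the ambient dimension ``$d$'' in their statements is replaced by the uniform angle constant --- then yield the H\"older continuity of the \emph{slow} subspace $x\mapsto V_{j+1}(x)$ on $\Lambda_l$. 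Running the same step for $\cA^*$ with index $j$ makes the slow tail of $\cA^*$, which is $(W_j(x))^\perp$, H\"older, hence by Lemma~\ref{l2} the head $x\mapsto W_j(x)$ is H\"older on a compact set of arbitrarily large measure. Step~3: fix a finite $i\geq2$ (the case $i=1$ is the previous sentence with $j=1$) and work on a compact $\Lambda_l$ on which $x\mapsto W_i(x)$ is H\"older; let $P(x)$ be the orthogonal projection onto $W_i(x)$ and put $A_n=\cA(x,n)P(x)$, $B_n=\cA(y,n)P(y)$. As in~\eqref{N1}--\eqref{N3}: for $v\in W_i(x)^\perp\oplus E_i(x)$ one has $\lVert A_nv\rVert\leq le^{(\lambda_i+\epsilon)n}\lVert v\rVert$, since $A_n$ annihilates $W_i(x)^\perp$ while $E_i(x)\subset W_i(x)$ and $\cA$ satisfies the exponent-$\lambda_i$ upper bound on $E_i$; for $v\in W_{i-1}(x)$ one has $\lVert A_nv\rVert\geq l^{-1}e^{(\lambda_{i-1}-\epsilon)n}\lVert v\rVert$ by the lower bound on $W_{i-1}$; and $\mathcal H=(W_i(x)^\perp\oplus E_i(x))\oplus W_{i-1}(x)$ carries a uniform angle bound, deduced from the angle bound between $W_{i-1}(x)$ and $E_i(x)$ (a restriction of the index-$(i-1)$ angle bound) and the orthogonality of $W_i(x)^\perp$ to $W_i(x)$. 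Because $\lambda_i<\lambda_{i-1}$, the finite-dimensional head $W_{i-1}$ plays the role of the ``fast'' complement, so Brin's dichotomy is of the right type; estimating $\lVert A_n-B_n\rVert\leq\lVert\cA(x,n)\rVert\,d(W_i(x),W_i(y))+\lVert\cA(x,n)-\cA(y,n)\rVert\leq C^nd(x,y)^\nu$ via Lemma~\ref{Brin2} and the H\"older continuity of $W_i$, Lemma~\ref{Brin} gives that $x\mapsto W_i(x)^\perp\oplus E_i(x)$ is H\"older on $\Lambda_l$, and Lemma~\ref{l3} --- applied with $W_i(x)^\perp$, H\"older by Lemma~\ref{l2}, and the orthogonal $E_i(x)$ --- then gives the H\"older continuity of $x\mapsto E_i(x)$. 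Case~(2) is the same scheme applied to every finite $i$.

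The two places where infinite-dimensionality genuinely intervenes, and hence the hard part, are: the uniform exponential upper bound on the infinite-dimensional slow tails $V_m$ with a \emph{tempered} constant, where the finite-dimensional device of Lemma~\ref{9216} must be replaced either by a direct appeal to the precise statement of the semi-invertible (quasi-)compact MET or by a re-derivation from the decay of the singular values of $\cA(x,n)$; and the Hilbert-space version of the adjoint-cocycle result Theorem~\ref{t2}, whose proof relies on orthogonal-complement and direct-sum manipulations that in infinite dimensions demand attention to closures and to the place of $E_\infty$. Once these are secured, the Birkhoff/Kingman temperedness arguments, the determinant estimate for the inverses on the finite-dimensional heads, and the geometric Lemmas~\ref{ang}, \ref{Brin} and~\ref{Brin2} transfer with only the routine substitution of uniform angle constants for the ambient dimension.
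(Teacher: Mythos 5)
Your proposal is correct and follows essentially the same route as the paper: the existence half is delegated to the semi-invertible MET of \cite{FLQ2}, and the H\"older continuity is obtained by rerunning Theorems~\ref{t3}, \ref{t2} and \ref{main}, with the single genuine obstruction being the norm bound on the infinite-dimensional slow subbundle, where Lemma~\ref{9216} fails and the paper (like you) substitutes the MET machinery --- specifically Proposition~14 of \cite{FLQ2} applied to the restricted cocycle. Your identification of the two places where infinite-dimensionality intervenes, and your observation that the lower bound and determinant estimate live on the finite-dimensional fast part while Lemma~\ref{ang} and the Brin lemmas transfer verbatim (with the ambient dimension replaced by the angle constant), match the paper's argument exactly.
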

\begin{proof}
In cases 1. and 2., all statements except the H\"older continuity follow from Theorem 17 in \cite{FLQ2}, which extends Theorem~\ref{t1} to a semi-invertible version of Oseledets' theorem under weaker conditions than those in our hypotheses, namely $f$ is an ergodic homeomorphism, $\mathcal{H}$ is a Banach space, $A$ takes values in the space of all quasi-compact operators, and $x\mapsto A(x)$ is $\mu$-continuous;  the resulting Oseledets splitting is $\mu$-continuous.

\textsl{Case 1:} In this case one is able to repeat the arguments in the proof of Theorem~\ref{main} and establish the H\"older continuity.
Indeed, we emphasize that all of the preparatory results that we used extend to this setting.
In particular, Theorem~\ref{t2} and Lemma \ref{Brin} are valid for cocycles on Hilbert space (with the same proof) and the corresponding version of  Theorem~\ref{t3} also holds. More precisely, one is able to repeat all  arguments in the proof Theorem~\ref{t3} with the exception of Lemma~\ref{9216}.

The statement of Lemma~\ref{9216} remains unchanged in our Hilbert space setting.
The proof in the Hilbert space case is identical to the proof of Proposition 14 in \cite{FLQ2}, where Proposition 14 is now applied to the
cocycle $\mathcal{A}(x,n)$ restricted to $E^1(x)$.

\textsl{Case 2:} Arguing as in case 1.\ above, one is able to show H\"{o}lder continuity of Oseledets subspaces $E_i(x)$ for $i\neq \infty$ on a compact set of arbitrarily large measure for any
H\"{o}lder continuous cocycle.
Unfortunately, we are not able to establish a similar property for $E_{\infty}(x)$.
The major obstacle in repeating our arguments is the fact that one doesn't have a lower bound for the expansion on a subspace complementary to $E_{\infty}(x)$.
\end{proof}

\section{Acknowledgements}
We thank Ian Morris for pointing us to the reference \cite{AG} and Anthony Quas for useful discussions.  We also thank an anonymous referee for their careful reading of the manuscript.

\end{document}